\newtheorem{teo}{Theorem}[section]
\newtheorem{prop}[teo]{Proposition}
\newtheorem{lm}[teo]{Lemma}
\newtheorem{coro}[teo]{Corollary}
\newtheorem{rem}[teo]{Remark}
\newcommand{\RR}{{\mathbb{R}}}
\newcommand{\inspol}{{\mathcal{A}}}
\newcommand{\der}{\partial}
\newcommand{\ep}{\varepsilon}
\newcommand{\om}{\Omega}
\newcommand{\dive}{\text{\normalfont div}}
\def\qed{\hfill$\square$\vspace{0.5cm}}    
\numberwithin{equation}{section}
\begin{document}

\title[]{Lipschitz stable determination of polygonal conductivity inclusions in a layered medium from the Dirichlet to Neumann map }

\author[E.~Beretta et al.]{Elena~Beretta}
\address{Dipartimento di Matematica ``Brioschi'',
Politecnico di Milano and New York University Abu Dhabi}
\email{elena.beretta@polimi.it}
\author[]{Elisa~Francini}
\address{Dipartimento di Matematica e Informatica ``U. Dini'',
Universit\`{a} di Firenze}
\email{elisa.francini@unifi.it}
\author[]{Sergio~Vessella}
\address{Dipartimento di Matematica e Informatica ``U. Dini'',
Universit\`{a} di Firenze}
\email{sergio.vessella@unifi.it}

\keywords{polygonal inclusions, conductivity equation, shape derivative, stability, inverse problems}

\subjclass[2010]{35R30, 35J25}
\begin{abstract} Using a distributed representation formula of the Gateaux derivative of the Dirichlet to Neumann map with respect to movements of a polygonal conductivity inclusion, \cite{BMPS}, we extend the results obtained in \cite{BF} proving global Lipschitz stability for the determination of a polygonal conductivity inclusion embedded in a layered medium from knowledge of the Dirichlet to Neumann map.

\end{abstract}

\maketitle

%
%
\section{Introduction}
In this paper we consider the inverse problem of determining a polygonal conductivity inclusion in a layered medium. We address the issue of stable reconstruction from knowledge of the Dirichlet-to Neumann map proving a quantitative Lipschitz stability estimate. 
This extends the results obtained in \cite{BF} where Lipschitz stability was proved in the case of one or more well separated polygonal inclusions embedded in a homogeneous medium. There, a crucial step to prove Lipschitz stability was an accurate investigation of the differentiability properties of the Dirichlet to Neumann map and a lower bound of the directional Gateaux  derivative of the Dirichlet to Neumann map with respect to movements of a polygonal conductivity inclusion. In \cite{BF} the authors used a boundary representation of the directional derivative obtained rigorously in \cite{BFV17} via an accurate study of the blow-up rate of the gradient of solutions in a neighborhood of the vertices of the polygon. 
In the case of an arbitrary polygonal regular partition of the domain $\om$ the behaviour of the solutions in a neighborhood of the points of intersection of the sides of the polygons depend on how the sides of elements of the partition intersect at those points not allowing in general the derivation of the boundary representation of the derivative, \cite{BFV}.
On the other hand,  as shown in \cite{BMPS},\cite{L},\cite{LS}, one can prove the existence of  a domain representation of the Gateaux derivative, also known as distributed derivative, which is more general than the boundary representation as it is well-defined for very general partitions.\\
The novelty of our approach is to use the domain representation and to derive continuity properties and a lower bound of the derivative which finally lead to quantitative Lipschitz stability estimates in terms of the Dirichlet-to-Neumann map.\\
 To our knowledge, stability for polygonal inclusions from finitely many boundary measurements has been derived only in the case of polygonal inclusions embedded in an homogeneous medium: in  \cite{BFI} where the authors derive a local stability result and in \cite{LT} where  the  logarithmic stability estimate for convex polygons is global but the measurement depends on the unknown polygon.   Also, the results obtained in \cite{AS} where Lipschitz stability has been proved for conductivities belonging to a finite dimensional subspace do not apply in our case. On the other hand, in several applications, like the geophysical one, many measurements are at disposal justifying the use of the full Dirichlet-to-Neumann map, \cite{BCFLM}.
One of the main tools in the proof of the lower bound for the derivative is quantitative estimation of the propagation of smallness for solutions to the conductivity equation. In order to obtain sharper estimates in the lower bounds  we use some recent results obtained in \cite{CW} where a three-spheres inequality in layered media has been derived.
The result can be extended also to a more general piecewise linear partition of the domain $\om$ unless the distance of the vertices of the polygon is at controlled positive distance from the layers of the partition. 
\begin{figure}
	\centering
	\includegraphics[scale=0.7]{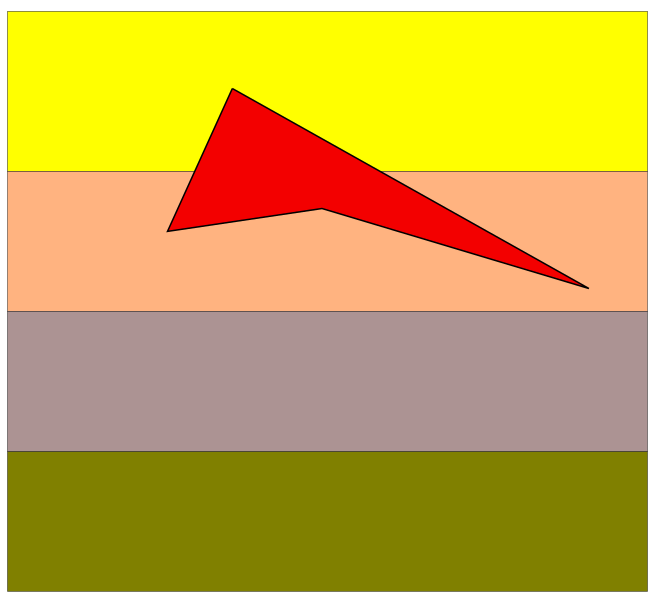}
	\caption{An admissible geometry.}
	\label{fig:geom_set}
\end{figure}
This kind of geometrical setting originates from applications, for example in geophysics exploration, where a layered medium (the earth) under inspection contains heterogeneities in the form of rough bounded subregions (for example subsurface salt bodies) with different conductivity properties \cite{LAD},\cite{F}.\\
Concerning the more realistic three dimensional setting, in \cite{ABFV} we are able to extend the ideas introduced here and prove Lipschitz stability in the case of an arbitrary polyhedral Lipschitz inclusions imbedded in a homogeneous medium and we expect that the result holds also in the case of polyhedra embedded in a layered medium.\\
The paper is organized as follows:  Section 1 contains the Introduction, in Section 2 we state our main assumptions and the main result. In Section 3 we derive a first rough stability estimate needed to prove the Lipschitz stability.  In Section 4 we state and prove some crucial regularity results and in Section 5 we prove the lower bound for the derivative and finally we establish our main stability result.



\section{Assumptions and main result}
%
%
Let $\om=(-L,L)\times(-L,L)\subset\RR^2$ and let $\omega_0,\dots,\omega_m$ be real numbers such that  $\omega_0=-L<\omega_1<\cdots<\omega_m=L$ and let
\[\overline{\om}=\cup_{i=1}^{m}\overline{\om}_i,\,\,\,\textrm{ where }
 \om_{i}=\{(x,y)\in \om: \omega_{i-1}<y<\omega_{i}\}\]

Let us consider a background conductivity of the form
\begin{equation}
	\label{1}
	\gamma_b(x)=\sum_{i=1}^m\gamma_i\chi_{\Omega_i}
\end{equation}
for positive constants $\gamma_i,i=1,\dots,m$.
Let us denote by $\Sigma_i$ the interface $(-L,L)\times\{\omega_i\}$. We assume
\begin{equation}\label{distanzastrati}
dist(\Sigma_i,\Sigma_{i+1})\geq d_0, \forall i=0,\dots,m-1.
\end{equation}
Let $k$ be a positive constant $k\neq \gamma_i,\,i=1,\dots,m$, let $\mathcal{P}\subset \om$ be a closed polygon and set
\[\gamma_{\mathcal{P}}=\gamma_b(x)+(k-\gamma_b(x))\chi_{\mathcal{P}}(x).\]
We denote by $dist(\cdot,\cdot)$ the euclidean distance between points or subsets in $\RR^2$. 

Let us denote by $\Lambda_{\mathcal{P}}$ the Dirichlet to Neumann map related to $\gamma_{\mathcal{P}}$, that is the map
\[\begin{array}{rcl}\Lambda_{\mathcal{P}}: H^{1/2}(\der\om)&\to& H^{-1/2}(\der\om)\\
f&\to&\gamma_{\mathcal{P}}{\frac{\der u}{\der n}}_{|_{\der\om}},\end{array}\]
where $u\in H^1(\om)$ is the unique solution to
\begin{equation}\label{2}
    \left\{\begin{array}{rcl}
             \dive(\gamma_{\mathcal{P}}\nabla u) & = & 0\mbox{ in }\om, \\
             u&=&f\mbox{ on }\der\om,\\
           \end{array}
    \right.
\end{equation}
and $n$ is the unit outer normal direction to $\der\om$.
The norm of the DN map in the space of linear operators $\mathcal{L}(H^{1/2}(\der\om), H^{-1/2}(\der\om))$ is defined by
\[\|\Lambda_{\mathcal{P}}\|_*=\sup\left\{\|\Lambda_{\mathcal{P}}\phi\|_{H^{-1/2}(\der\om)}/\|\phi\|_{H^{1/2}(\der\om)}\,:\,\phi\neq 0\right\}. \]

Let us know define the class of polygons we are dealing with.

Let $\inspol$ the set of closed, simply connected, simple polygons $\mathcal{P}\subset \om$ such that:
\begin{equation}\label{lati}
\mathcal{P}\mbox{ has at most }N_0\mbox{ sides each one with length greater than }d_0;
\end{equation}
\begin{equation}\label{lip}\der\mathcal{P}\mbox{ is of Lipschitz class with constants }r_0\mbox{ and }K_0,\end{equation}
there exists a  constant $\beta_0\in (0,\pi/2]$ such that the angle $\beta$ in each vertex of $\mathcal{P}$ satisfies the conditions
\begin{equation}\label{angoli}
\beta_0\leq\beta\leq 2\pi-\beta_0\mbox{ and } |\beta-\pi|\geq\beta_0,
\end{equation}
\begin{equation}\label{distanzafrontiera}
dist(\mathcal{P},\der\om)\geq d_0.
\end{equation}
We also assume that for every vertex $P_k$ of $\mathcal{P}$ we have
\begin{equation}\label{distanzainterfaccia}
dist(P_k,\Sigma_i)\geq d_0/2, \forall i=1,\dots,m-1.
\end{equation}

Notice that we do not assume convexity of the polygon.

We also assume that we can distinguish the inclusion from the background, so we assume there is a positive constant $c_0$ such that
\begin{equation}\label{contrasto}
|k-\gamma_i|\geq c_0,\text{ for }i=1,\dots, m.
\end{equation}

The constants $N_0$, $d_0$, $r_0$, $K_0$, $L$, $\beta_0$,  $c_0$, and $m$ will be referred to as the \textit{a priori data}.

In the sequel we will introduce a number of constants that we will usually denote by $C$.
The values of these constants might differ from one line to the other but they will only be determined by the a priori data and will always be greater than $1$.

Finally, let us recall the definition of the Hausdorff distance between two sets $A$ and $B$:
\[d_H(A,B)=\max\{\sup_{x\in A}\inf_{y\in B}dist(x,y),\sup_{y\in B}\inf_{x\in A}dist(y,x)\}.\]

The following stability results holds:

\begin{teo}\label{mainteo}
Let $\mathcal{P}^0,\mathcal{P}^1\in\inspol$ and let $k,\gamma_i,\,i=1,\dots,m$ satisfy assumption \eqref{contrasto}.
There exists  $C$ depending only on the a priori data such that
\[d_{H}\left(\der \mathcal{P}^0,\der\mathcal{P}^1\right)\leq C
\|\Lambda_{\mathcal{P}^0}-\Lambda_{\mathcal{P}^1}\|_*.\]
\end{teo}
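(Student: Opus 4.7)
The plan is to follow the Lipschitz stability strategy pioneered in \cite{BF}: combine the rough continuous-dependence estimate of Section 3 with a uniform lower bound on the directional Gateaux derivative of the map $\mathcal{P}\mapsto\Lambda_{\mathcal{P}}$ over $\inspol$. The rough estimate guarantees that if $\|\Lambda_{\Pz}-\Lambda_{\mathcal{P}^1}\|_*$ is small enough then $\Pz$ and $\mathcal{P}^1$ are Hausdorff close and, in particular, have the same number of vertices, so that they can be joined inside $\inspol$ by the linear interpolation $\Pt$ of their vertices. Letting $h=(h_1,\dots,h_N)$ be the constant velocity of that path (the vertex displacements), a first-order expansion around $t=0$ gives
\[
\Lambda_{\mathcal{P}^1}-\Lambda_{\Pz}=\Lambda'_{\Pz}[h]+R(h),
\]
where the continuity properties of $\mathcal{P}\mapsto\Lambda'_{\mathcal{P}}$ proved in Section 4 allow the remainder to be estimated as $\|R(h)\|_*\leq\omega(|h|)|h|$ for some modulus of continuity $\omega$.

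The departure from \cite{BF} is essential at the next step: in a layered background the boundary representation of $\Lambda'$ may fail to be well-defined whenever a side of $\Pz$ hits an interface $\Sigma_i$, as discussed in the Introduction. I would therefore work with the distributed representation of \cite{BMPS}, in which $\Lambda'_{\Pz}[h]$ acts as a volume bilinear pairing, over a neighborhood of the perturbed boundary, of two transmission-problem solutions for the unperturbed conductivity $\gamma_{\Pz}$ tested against boundary data on $\der\om$. The desired lower bound $\|\Lambda'_{\Pz}[h]\|_*\geq c|h|$ then reduces to showing that this bilinear form can be made nontrivial of order $|h|$ by a suitable choice of inputs.

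The production of such inputs is what I expect to be the main obstacle. The idea is to pick the vertex $P_k$ whose displacement $|h_k|$ is maximal (so $|h_k|\geq|h|/N_0$), localise near $P_k$ a pair of singular or CGO-type solutions of the background layered equation with controlled concentration rate, and propagate quantitative smallness from such solutions back to $\der\om$ via the three-spheres inequality in layered media of \cite{CW}. Conditions \eqref{distanzastrati} and \eqref{distanzainterfaccia} are precisely what is needed to guarantee that $P_k$ remains at controlled positive distance from every interface $\Sigma_i$, so that the propagation constants depend only on the a priori data. The jump $|k-\gamma_i|\geq c_0$ from \eqref{contrasto} together with the angular opening bound \eqref{angoli} then force the corner singularity of $\nabla u$ at $P_k$ to yield a nonvanishing contribution of order $|h_k|$ in the pairing. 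A nontrivial bookkeeping step is to rule out destructive cancellations between contributions coming from different vertices, which is where $|\beta-\pi|\geq\beta_0$ and the minimal side length $d_0$ enter.

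Once the lower bound $c|h|\leq\|\Lambda'_{\Pz}[h]\|_*$ has been established uniformly in $\Pz\in\inspol$, combining it with the first-order expansion yields
\[
c|h|\leq\|\Lambda_{\mathcal{P}^1}-\Lambda_{\Pz}\|_*+\omega(|h|)|h|.
\]
Choosing the threshold in the rough stability estimate small enough that $\omega(|h|)\leq c/2$, the remainder is absorbed into the left-hand side and, observing that $d_H(\der\Pz,\der\mathcal{P}^1)$ is comparable to $|h|=\max_k|h_k|$ once the polygons have been matched vertex-by-vertex, the Lipschitz estimate of Theorem~\ref{mainteo} follows.
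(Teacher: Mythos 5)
Your overall skeleton is the right one and matches the paper's: a rough logarithmic estimate ensures vertex correspondence, a linear interpolation path $\mathcal{P}^t$ is used, the distributed representation of the Gateaux derivative from \cite{BMPS} is invoked, propagation of smallness in layered media from \cite{CW} is the key quantitative tool, and the Lipschitz estimate follows by absorbing the remainder after establishing a uniform lower bound on the derivative. However, the mechanism you propose for extracting the lower bound is materially different from what the paper does, and it would run into exactly the obstruction this paper is designed to avoid. You envision localising singular solutions near a vertex $P_k$ and reading off a nonvanishing contribution ``of order $|h_k|$'' from the \emph{corner singularity} of $\nabla u$ there. That is the strategy of the boundary-representation approach in \cite{BFV17,BF}, which requires a delicate blow-up analysis of the gradient at the corners; the introduction explains that this analysis does not extend cleanly when sides of $\mathcal{P}$ cross an interface $\Sigma_i$, which is precisely why the distributed representation is used here. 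You cannot simply invoke the distributed representation \emph{and} the corner blow-up: the distributed formula is a volume integral over a tubular neighborhood $W$ of $\partial\mathcal{P}^0$, and the lower bound has to be manufactured from a different singularity.

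What the paper actually does in Proposition~\ref{p3.3} is place the two singular solutions $u_0=G_0(\cdot,y)$, $v_0=G_0(\cdot,z)$ with poles initially outside $\Omega$, define $S_0(y,z)$ as the distributed pairing, and observe that $S_0$ satisfies $\operatorname{div}(\gamma_{\mathcal{P}^0}\nabla S_0)=0$ in \emph{both} pole variables away from $\mathcal{P}^0\cup\mathcal{B}$. It integrates the volume formula by parts into boundary terms over $\partial\mathcal{P}^0$, $\Sigma$, and $\partial\mathcal{B}$ (the jump across $\Sigma$ vanishes by \eqref{b51}; the contributions from $\partial\mathcal{B}$ and far sides are a priori bounded), then lets the poles approach the \emph{midpoint} $P$ of a side $P^0_jP^0_{j+1}$ --- deliberately away from vertices and interfaces, so Proposition~\ref{Green} applies --- and reads off a $\tilde{\mathcal{U}}(P)\cdot n(P)/r$ blow-up from the bi-phase fundamental solution. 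Combined with the two-variable propagation-of-smallness bound $|S_0|\lesssim m_1^{\delta^2}$ and a three-spheres iteration, this yields $|\tilde{\mathcal{U}}(P)\cdot n(P)|\leq\omega_0(m_1)$ on each side; by affineness of $\tilde{\mathcal{U}}$ on segments this passes to the vertices, and at the vertex $j_0$ of maximal displacement the two linearly independent normals (this is where \eqref{angoli} enters, not as a cancellation argument across vertices) force $1/M\leq\omega_0(m_1)$ and hence the desired lower bound on $m_1$. You should revise the lower-bound step along these lines: localise at side midpoints, not vertices, and extract the $1/r$ singularity from the Green's function poles rather than from the corner behaviour of $\nabla u$.
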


\begin{teo}\label{stabint}
Let $\mathcal{P}^0,\mathcal{P}^1\in\inspol$. There exists a positive constant  $C$ depending only on the a priori data such that,
\begin{equation}\label{stimaL1}
\|\gamma_{\mathcal{P}^0}-\gamma_{\mathcal{P}^1}\|_{L^1(\Omega)}\leq C\|\Lambda_{\mathcal{P}^0}-\Lambda_{\mathcal{P}^1}\|_*.
\end{equation}
\end{teo}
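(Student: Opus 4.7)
The approach is to deduce Theorem~\ref{stabint} from the boundary-stability estimate of Theorem~\ref{mainteo} combined with a purely geometric lemma that converts Hausdorff closeness of the boundaries into closeness of the characteristic functions. Writing
\[\gamma_{\mathcal{P}^0}-\gamma_{\mathcal{P}^1}=(k-\gamma_b)\bigl(\chi_{\mathcal{P}^0}-\chi_{\mathcal{P}^1}\bigr),\]
and using that $|k-\gamma_b|\le\max_i|k-\gamma_i|$ is controlled by the a priori data, one obtains
\[\|\gamma_{\mathcal{P}^0}-\gamma_{\mathcal{P}^1}\|_{L^1(\Omega)}\le C\,|\mathcal{P}^0\triangle\mathcal{P}^1|,\]
so it suffices to estimate the area of the symmetric difference by $\|\Lambda_{\mathcal{P}^0}-\Lambda_{\mathcal{P}^1}\|_*$.

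I would then split into two regimes. The trivial bound $|\mathcal{P}^0\triangle\mathcal{P}^1|\le|\Omega|=4L^2$ handles the case $\|\Lambda_{\mathcal{P}^0}-\Lambda_{\mathcal{P}^1}\|_*\ge\varepsilon_0$ for any threshold $\varepsilon_0$ chosen in terms of the a priori data, where \eqref{stimaL1} is immediate with constant $4L^2/\varepsilon_0$. In the remaining regime $\|\Lambda_{\mathcal{P}^0}-\Lambda_{\mathcal{P}^1}\|_*<\varepsilon_0$, Theorem~\ref{mainteo} yields
\[\delta:=d_H\bigl(\partial\mathcal{P}^0,\partial\mathcal{P}^1\bigr)\le C\,\|\Lambda_{\mathcal{P}^0}-\Lambda_{\mathcal{P}^1}\|_*,\]
and I would fix $\varepsilon_0$ so small that $\delta$ falls below a purely geometric threshold $\delta_0$, depending only on $r_0,K_0,\beta_0,d_0$, at which the uniform Lipschitz parameterization of $\partial\mathcal{P}^j$ becomes usable on both polygons simultaneously.

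The heart of the matter is then the geometric lemma: for $\mathcal{P}^0,\mathcal{P}^1\in\inspol$ with $\delta\le\delta_0$ one has $|\mathcal{P}^0\triangle\mathcal{P}^1|\le C\delta$. I would prove this by showing that $\mathcal{P}^0\triangle\mathcal{P}^1$ is contained in the $\delta$-tubular neighborhood of $\partial\mathcal{P}^0\cup\partial\mathcal{P}^1$, whose total perimeter is controlled by the a priori data via \eqref{lati}, so that tube has area at most $C\delta$. The inclusion is obtained by covering each $\partial\mathcal{P}^j$ by finitely many charts of radius $r_0$ in which the boundary is a $K_0$-Lipschitz graph; once $\delta\le\delta_0$, the other boundary must enter the same chart as a nearby $K_0$-Lipschitz graph, and the portion of $\mathcal{P}^0\triangle\mathcal{P}^1$ lying in that chart is sandwiched between the two graphs with vertical separation at most $C\delta$, producing area at most (chart length)$\times C\delta$.

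The main obstacle is the geometric inclusion at vertices of the polygons: the two-sided angle condition \eqref{angoli}, together with the lower bound $d_0$ on side lengths from \eqref{lati}, is needed to match charts across a corner and to ensure that, once $\delta\ll d_0\sin(\beta_0/2)$, no connected component of $\mathcal{P}^0\triangle\mathcal{P}^1$ escapes the $\delta$-tube. Away from vertices the argument reduces to the standard comparison of two Lipschitz graphs. Combining the two regimes yields a single constant $C$ depending only on the a priori data and completes the proof of \eqref{stimaL1}.
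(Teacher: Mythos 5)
The paper does not actually spell out a proof of Theorem~\ref{stabint}; it is left implicit after Theorem~\ref{mainteo}. Your reduction $\|\gamma_{\mathcal{P}^0}-\gamma_{\mathcal{P}^1}\|_{L^1(\Omega)}\le C\,|\mathcal{P}^0\triangle\mathcal{P}^1|$ and the two-regime split (trivial bound by $|\Omega|$ for large $\|\Lambda_{\mathcal{P}^0}-\Lambda_{\mathcal{P}^1}\|_*$, Theorem~\ref{mainteo} plus geometry for small) are exactly the right scaffolding, and this is clearly the intended route.

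However, the geometric step as you wrote it has a gap. You state that ``$\mathcal{P}^0\triangle\mathcal{P}^1$ is contained in the $\delta$-tubular neighborhood of $\partial\mathcal{P}^0\cup\partial\mathcal{P}^1$'' and then estimate, in each Lipschitz chart, the area of the piece of the symmetric difference that happens to lie in that chart. But the chart-by-chart area estimate only controls what happens \emph{near} the boundaries; it does not, by itself, rule out points of $\mathcal{P}^0\triangle\mathcal{P}^1$ lying outside every chart, i.e.\ far from both boundaries. That containment is precisely the content of the tube claim and needs a separate (topological/regularity) argument before the chart estimate can close the loop. Matching charts across corners, which you correctly flag as the delicate spot, is an additional technicality on top of this. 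A cleaner path is already available inside the paper: once $\varepsilon_0$ is chosen so that Theorem~\ref{mainteo} forces $d_H(\partial\mathcal{P}^0,\partial\mathcal{P}^1)<\delta_0$, Proposition~\ref{prop3.3} identifies the vertices and gives $dist(P^0_j,P^1_j)\le C_0\,d_H(\partial\mathcal{P}^0,\partial\mathcal{P}^1)$. Then interpolate by moving one vertex at a time and use the triangle inequality for symmetric differences: each single-vertex move changes the polygon by a set contained in two triangles each having one side of length $\le C_0\,d_H$ and the remaining sides of length $\le 2\sqrt{2}L$, hence of area $\le C L\,d_H$. Summing over the at most $N_0$ vertices yields $|\mathcal{P}^0\triangle\mathcal{P}^1|\le C\,d_H(\partial\mathcal{P}^0,\partial\mathcal{P}^1)$ with $C$ depending only on $N_0$, $L$, $C_0$, which, combined with Theorem~\ref{mainteo}, gives \eqref{stimaL1}. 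This avoids both the tube-containment claim and the corner-chart matching.
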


%
\section{Logarithmic stability estimates}
As in \cite{BF} we notice that, thanks to Lemma 2.2 in \cite{MP} we can apply Theorem 1.1 in \cite{CFR} and have the following
\begin{prop} (3.1 in \cite{BF})
There exists $\alpha<1/4$ and $C>1$ depending only on the a priori data, such that
\begin{equation}\label{log}
\|\gamma_{\mathcal{P}^0}-\gamma_{\mathcal{P}^1}\|_{L^2(\om)}\leq C\left|\ln\|\Lambda_{\mathcal{P}^0}-\Lambda_{\mathcal{P}^1}\|_*\right|^{-\alpha}
\end{equation}
if $\|\Lambda_{\mathcal{P}^0}-\Lambda_{\mathcal{P}^1}\|_*<1/2$.
\end{prop}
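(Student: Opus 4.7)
\smallskip

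\textbf{Proof plan for Proposition 3.1.} The strategy is to invoke directly the logarithmic stability theorem of \cite{CFR} for the inverse conductivity problem with discontinuous coefficients, after verifying that the family $\{\gamma_{\mathcal{P}}:\mathcal{P}\in\inspol\}$ satisfies the regularity hypotheses of that theorem uniformly in the a priori data. Since the statement itself announces that this is done ``as in \cite{BF}'', the plan is essentially a hypothesis-check followed by a quotation, and no new analytic idea is required beyond what is already in the cited literature.

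First, I would recall the structure of the conductivities under consideration. Each $\gamma_{\mathcal{P}}$ is piecewise constant on a partition of $\om$ whose boundaries consist of the $m-1$ horizontal interfaces $\Sigma_i$ and the boundary of the polygon $\mathcal{P}$. The conductivity takes values in the finite set $\{\gamma_1,\dots,\gamma_m,k\}$, bounded above and below by constants depending only on the a priori data, and the interfaces are Lipschitz with constants $(r_0,K_0)$; moreover the number of sides is at most $N_0$, all side lengths are at least $d_0$, all angles are bounded away from $0,\pi,2\pi$ by $\beta_0$, and \eqref{distanzafrontiera}--\eqref{distanzainterfaccia} give uniform separation between the singular sets and $\der\om$ and between the polygon vertices and the layers. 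These geometric controls are exactly what is needed to bound, uniformly in $\mathcal{P}\in\inspol$, any quantitative regularity norm (bounded variation, piecewise $W^{1,\infty}$ over a controlled partition, etc.) that the Calder\'on-type stability theorem requires.

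Second, I would apply Lemma 2.2 of \cite{MP}, whose role is precisely to translate these geometric bounds into the analytic a priori bound on the conductivity class required by \cite{CFR}. With this input, Theorem 1.1 of \cite{CFR} delivers the existence of constants $\alpha<1/4$ and $C>1$, depending only on the a priori data, such that the logarithmic estimate \eqref{log} holds whenever $\|\Lambda_{\mathcal{P}^0}-\Lambda_{\mathcal{P}^1}\|_*<1/2$ (the smallness condition is only used so that the logarithm is positive and the right-hand side makes sense).

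The only potential obstacle is the verification that $\inspol$ lies inside the admissible class of \cite{CFR}; this is where the role of assumption \eqref{distanzainterfaccia} (preventing vertices of $\mathcal{P}$ from approaching the layer interfaces $\Sigma_i$) is essential, since without it one could produce degenerate configurations in which the piecewise regular structure of $\gamma_{\mathcal{P}}$ collapses as vertices hit interfaces, and the uniform bound of \cite{MP} would fail. Apart from this check, the argument is a direct application of the cited results, and no new estimate is produced here beyond what is already established in \cite{BF}.
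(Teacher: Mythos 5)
Your proposal takes the same route as the paper: cite Lemma~2.2 of \cite{MP} to verify the hypotheses of Theorem~1.1 in \cite{CFR}, then quote the resulting logarithmic estimate. The only minor quibble is your claim that assumption~\eqref{distanzainterfaccia} is essential at this step --- the paper does not invoke it here (the piecewise constant conductivity with controlled Lipschitz discontinuity set already lies in the \cite{CFR} class without it), and that assumption is really needed later, in the derivation of the lower bound for the shape derivative.
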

\begin{coro}\label{coro1} Under the assumptions of Theorem \ref{mainteo}, there exists $\alpha<1/4$ and $C>1$ depending only on the a priori data, such that, if $\|\Lambda_{\mathcal{P}^0}-\Lambda_{\mathcal{P}^1}\|_*<1/2$, then
\begin{equation}\label{4}
	|\mathcal{P}^1\Delta \mathcal{P}^0|\leq  \frac{C}{c_0}\left|\ln\|\Lambda_{\mathcal{P}^0}-\Lambda_{\mathcal{P}^1}\|_*\right|^{-2\alpha}.
\end{equation}
\end{coro}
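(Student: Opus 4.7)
The statement is essentially a direct consequence of Proposition 3.1 together with the contrast assumption \eqref{contrasto}; the plan is simply to convert the $L^2$ stability estimate on the conductivities into a measure estimate on the symmetric difference of the two polygons.

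First I would write the difference of the conductivities explicitly. Since
\[
\gamma_{\mathcal{P}^j}(x) = \gamma_b(x) + (k-\gamma_b(x))\chi_{\mathcal{P}^j}(x), \qquad j=0,1,
\]
we have
\[
\gamma_{\mathcal{P}^0}(x) - \gamma_{\mathcal{P}^1}(x) = (k-\gamma_b(x))\bigl(\chi_{\mathcal{P}^0}(x) - \chi_{\mathcal{P}^1}(x)\bigr).
\]
On the symmetric difference $\mathcal{P}^0\Delta\mathcal{P}^1$ the factor $\chi_{\mathcal{P}^0}-\chi_{\mathcal{P}^1}$ has absolute value $1$, while outside of it the difference vanishes. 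Using the definition \eqref{1} of $\gamma_b$ and the lower bound $|k-\gamma_i|\geq c_0$ from \eqref{contrasto}, I would conclude that
\[
|\gamma_{\mathcal{P}^0}(x)-\gamma_{\mathcal{P}^1}(x)| \geq c_0 \, \chi_{\mathcal{P}^0\Delta\mathcal{P}^1}(x) \quad \text{for a.e. } x\in\om,
\]
and therefore, integrating the squares,
\[
c_0^2\,|\mathcal{P}^0\Delta\mathcal{P}^1| \leq \|\gamma_{\mathcal{P}^0}-\gamma_{\mathcal{P}^1}\|^2_{L^2(\om)}.
\]

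Finally I would plug in the bound \eqref{log} from Proposition 3.1 (squared): under the hypothesis $\|\Lambda_{\mathcal{P}^0}-\Lambda_{\mathcal{P}^1}\|_*<1/2$,
\[
c_0^2\,|\mathcal{P}^0\Delta\mathcal{P}^1| \leq C^2\bigl|\ln\|\Lambda_{\mathcal{P}^0}-\Lambda_{\mathcal{P}^1}\|_*\bigr|^{-2\alpha},
\]
and dividing by $c_0^2$ and renaming the constant yields \eqref{4}. There is no real obstacle here, since $C$ is allowed to depend on the a priori data and the area of $\om$ (bounded by $(2L)^2$) enters only through the constant in Proposition 3.1.
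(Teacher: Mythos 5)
Your proposal is correct and is essentially identical to the paper's proof: both reduce to the pointwise identity $\gamma_{\mathcal{P}^0}-\gamma_{\mathcal{P}^1}=(k-\gamma_b)(\chi_{\mathcal{P}^0}-\chi_{\mathcal{P}^1})$, invoke the contrast bound \eqref{contrasto} on the symmetric difference, square and integrate, and then apply Proposition 3.1.
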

\begin{proof}
It follows immediately from \eqref{log} and from assumption \eqref{contrasto}, that gives
\begin{eqnarray*}
\|\gamma_{\mathcal{P}^0}-\gamma_{\mathcal{P}^1}\|^2_{L^2(\om)}&=&
\int_\om\left(\gamma_{\mathcal{P}^0}-\gamma_{\mathcal{P}^1}\right)^2=\nonumber\\
&=&\int_{\mathcal{P}^1\setminus \mathcal{P}^0}(k-\gamma_b(x))^2+\int_{\mathcal{P}^0\setminus \mathcal{P}^1}(k-\gamma_b(x))^2\nonumber\\
&\geq&c_0^2|\mathcal{P}^1\Delta \mathcal{P}^0|.
\end{eqnarray*}
\end{proof}
We can now follow the same strategy as in \cite{BF} and notice that a priori assumptions on the set of polygons $\inspol$ gives a relation between $|\mathcal{P}^1\Delta \mathcal{P}^0|$, $d_H(\der\mathcal{P}^0,\der\mathcal{P}^1)$ and
the distance between endpoint of the polygons.

Let us recall here some results from \cite{BF}.
\begin{lm}\label{lemma3.2}(Lemma 3.2 in \cite{BF})
Given two polygons $\mathcal{P}^0$ and $\mathcal{P}^1$ in $\inspol$, we have
\[d_H(\der\mathcal{P}^0,\der\mathcal{P}^1)\leq C\sqrt{\left|\mathcal{P}^0\Delta\mathcal{P}^1\right|}\]
where $C$ depends only on the a priori data.
\end{lm}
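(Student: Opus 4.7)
Set $\delta := d_H(\partial\mathcal{P}^0, \partial\mathcal{P}^1)$; by symmetry, after possibly swapping $\mathcal{P}^0$ and $\mathcal{P}^1$, there is a point $x_0\in\partial\mathcal{P}^0$ realizing the distance, i.e.\ $\mathrm{dist}(x_0,\partial\mathcal{P}^1)=\delta$. Then the open ball $B_\delta(x_0)$ does not meet $\partial\mathcal{P}^1$, and because $B_\delta(x_0)$ is connected while $\mathcal{P}^1$ is simply connected, $B_\delta(x_0)$ lies either entirely in $\mathrm{int}(\mathcal{P}^1)$ or entirely in $\Omega\setminus\mathcal{P}^1$. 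The strategy is to show that on the ``wrong'' side of $\partial\mathcal{P}^0$ inside $B_\delta(x_0)$ there is a set of area $\gtrsim \min(\delta,\rho_0)^2$; such a set automatically lies in $\mathcal{P}^0\Delta\mathcal{P}^1$, yielding the bound.

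The key geometric ingredient is a uniform local density estimate: there exist $\rho_0>0$ and $c_1>0$ depending only on the a priori data such that for every $\mathcal{P}\in\inspol$, every $x\in\partial\mathcal{P}$, and every $\rho\in(0,\rho_0]$,
\begin{equation*}
|B_\rho(x)\cap\mathcal{P}|\;\ge\; c_1\rho^2\quad\text{and}\quad|B_\rho(x)\setminus\mathcal{P}|\;\ge\; c_1\rho^2.
\end{equation*}
For $x$ in the relative interior of an edge this is immediate from the Lipschitz parametrization in \eqref{lip}. For $x$ a vertex $P_k$, the minimum edge length in \eqref{lati} (together with the fact that $\mathcal{P}$ is simple) guarantees that for $\rho$ smaller than a fixed fraction of $d_0$ the set $\partial\mathcal{P}\cap B_\rho(x)$ consists exactly of the two straight edge-segments meeting at $P_k$, so that $B_\rho(x)\cap\mathcal{P}$ and $B_\rho(x)\setminus\mathcal{P}$ are precisely two circular wedges of openings $\beta\in[\beta_0,2\pi-\beta_0]$ and $2\pi-\beta$ respectively; by \eqref{angoli} each has area at least $\tfrac12\beta_0\rho^2$.

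Now set $\rho=\min(\delta,\rho_0)$. Whichever of $B_\rho(x_0)\cap\mathcal{P}^0$ or $B_\rho(x_0)\setminus\mathcal{P}^0$ lies on the opposite side from the component of $\Omega\setminus\partial\mathcal{P}^1$ containing $B_\delta(x_0)$ is contained in $\mathcal{P}^0\Delta\mathcal{P}^1$, hence $|\mathcal{P}^0\Delta\mathcal{P}^1|\ge c_1\rho^2$. If $\delta\le\rho_0$ this gives $\delta\le c_1^{-1/2}\sqrt{|\mathcal{P}^0\Delta\mathcal{P}^1|}$; if instead $\delta>\rho_0$, then trivially $\delta\le\mathrm{diam}(\Omega)=2\sqrt2\,L$ while $|\mathcal{P}^0\Delta\mathcal{P}^1|\ge c_1\rho_0^2$, so the desired inequality still holds with constant $(2\sqrt2\,L)/(\rho_0\sqrt{c_1})$. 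Taking the larger of the two constants yields the $C$ claimed, depending only on the a priori data.

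The main obstacle is the uniform local density estimate at vertices: one must combine the minimum edge-length \eqref{lati}, the Lipschitz constants \eqref{lip}, the two-sided angle bound \eqref{angoli}, and \eqref{distanzafrontiera} to exclude degenerate configurations (nearly collinear adjacent edges, or two vertices approaching each other) that would force $c_1$ or $\rho_0$ to depend on the specific polygon. Once this density bound is secured with constants depending only on the a priori data, the remainder of the argument is elementary planar geometry.
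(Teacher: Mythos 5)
Your argument is correct, and since the paper itself gives no proof of this statement (it simply cites Lemma~3.2 of \cite{BF}), there is nothing in-text to compare it against; the route you take --- a boundary point $x_0\in\partial\mathcal{P}^0$ realizing $\delta=d_H(\partial\mathcal{P}^0,\partial\mathcal{P}^1)$, the observation that $B_\delta(x_0)$ misses $\partial\mathcal{P}^1$ and hence lies in one Jordan component of its complement, and a uniform two-sided boundary density estimate for polygons in $\inspol$ --- is the standard one and gives the lemma with constants depending only on the a priori data. Two small remarks. The sentence ``whichever of $B_\rho(x_0)\cap\mathcal{P}^0$ or $B_\rho(x_0)\setminus\mathcal{P}^0$ lies on the opposite side from the component of $\Omega\setminus\partial\mathcal{P}^1$ containing $B_\delta(x_0)$ \dots'' is misphrased: both pieces of $B_\rho(x_0)$ lie in the \emph{same} component of $\Omega\setminus\partial\mathcal{P}^1$, namely the one containing $B_\delta(x_0)$. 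What you need, and what the argument clearly has in mind, is the dichotomy: if $B_\delta(x_0)\subset\mathrm{int}(\mathcal{P}^1)$ then $B_\rho(x_0)\setminus\mathcal{P}^0\subset\mathcal{P}^1\setminus\mathcal{P}^0$, while if $B_\delta(x_0)\cap\mathcal{P}^1=\emptyset$ then $B_\rho(x_0)\cap\mathcal{P}^0\subset\mathcal{P}^0\setminus\mathcal{P}^1$; either way $|\mathcal{P}^0\Delta\mathcal{P}^1|\ge c_1\rho^2$. Also, the ``main obstacle'' you flag at the end (securing the density estimate at vertices with uniform constants) is in fact handled directly by assumption \eqref{lip}: ``$\partial\mathcal{P}$ of Lipschitz class with constants $r_0$ and $K_0$'' is precisely a uniform two-sided interior/exterior cone condition at scale $r_0$, valid at every boundary point including the vertices, which yields the two-sided density bound with $\rho_0\sim r_0$ and $c_1$ depending only on $K_0$ and with no case distinction. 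The wedge-area computation at vertices is correct but redundant once \eqref{lip} is invoked, and the worry about non-adjacent edges approaching a vertex is excluded for the same reason.
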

\begin{prop}\label{prop3.3}(Proposition 3.3 in \cite{BF})
Given the set of polygons $\inspol$ there exist $\delta_0$ and $C_0$ depending only on the a priori data  such that, if for some $\mathcal{P}^0$, $\mathcal{P}^1\in \inspol$ we have
\[d_H(\der\mathcal{P}^0,\der\mathcal{P}^1)\leq \delta_0,\]
then
 $\mathcal{P}^0$ and $\mathcal{P}^1$ have the same number $N$ of vertices $\{P^0_i\}_{i=1}^N$ and $\{P^1_i\}_{i=1}^N$, respectively, that can be ordered in such a way that
\begin{equation}\label{dve}dist(P^0_i,P^1_i)\leq C_0d_H(\der\mathcal{P}^0,\der\mathcal{P}^1) \mbox{ for every }i=1,\ldots,N.\end{equation}
More precisely $\delta_0=\min\{K_0r_0, d_0\frac{\sin\beta_0}{16}\}$ and $C_0=\sqrt{1+\frac{16}{\sin^2\beta_0}}$
\end{prop}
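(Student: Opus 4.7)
The plan is to prove the proposition by a direct geometric argument that exploits the angle condition \eqref{angoli}: a corner of $\mathcal{P}^0$ with angle $\beta$ satisfying $|\beta-\pi|\geq\beta_0$ cannot be approximated in Hausdorff distance by a single straight segment, which forces $\der\mathcal{P}^1$ to possess a nearby corner. Set $d:=d_H(\der\mathcal{P}^0,\der\mathcal{P}^1)$ and $\rho:=4d/\sin\beta_0$. The hypothesis $d\leq\delta_0\leq d_0\sin\beta_0/16$ guarantees $\rho\leq d_0/4$, strictly smaller than any side length. I would first use \eqref{lip}, \eqref{lati} and \eqref{angoli} to verify that, for any vertex $P=P^0_i$, the intersection $\der\mathcal{P}^0\cap B(P,\rho)$ consists of exactly the two line segments emanating from $P$, and that no other part of $\der\mathcal{P}^0$ comes within distance $\rho/2$ of $B(P,\rho)$.

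The core step is to show that, for every vertex $P$ of $\mathcal{P}^0$, some vertex of $\mathcal{P}^1$ lies in $B(P,\rho)$. Arguing by contradiction, assume no vertex of $\mathcal{P}^1$ is in $B(P,\rho)$, so that $\der\mathcal{P}^1\cap B(P,\rho)$ is a disjoint union of straight segments (pieces of single sides of $\mathcal{P}^1$). Place $P$ at the origin with the interior bisector of the geometric angle (in $[\beta_0,\pi-\beta_0]$) along the positive $x$-axis, and let $A,B$ be the two points on the sides of $\mathcal{P}^0$ at distance $\rho$ from $P$. A direct computation gives
\[\text{dist}\bigl((A+B)/2,\der\mathcal{P}^0\bigr)=(\rho/2)|\sin\beta|\geq(\rho/2)\sin\beta_0=2d.\]
By the Jordan-curve structure of $\der\mathcal{P}^1$ and the Hausdorff proximity, $\der\mathcal{P}^1$ must contain a connected arc inside $B(P,\rho)$ whose endpoints are within $d$ of $A$ and $B$; being straight, its midpoint is within $d$ of $(A+B)/2$ and hence at distance at least $2d-d=d>0$ from $\der\mathcal{P}^0$. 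A slight strengthening (replacing $\rho$ by $(4+\varepsilon)d/\sin\beta_0$ and passing to the limit, or using the strict separation $|\beta-\pi|\geq\beta_0>0$) turns this into a strict contradiction of the Hausdorff bound.

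Swapping the roles of $\mathcal{P}^0$ and $\mathcal{P}^1$ yields the analogous statement in reverse. The side-length lower bound $d_0$ combined with the choice of $\delta_0$ ensures that the balls $B(P^0_i,\rho)$ are pairwise disjoint, so the resulting vertex-to-vertex map is a bijection; in particular $\mathcal{P}^0$ and $\mathcal{P}^1$ share the same number $N$ of vertices. Ordering the vertices cyclically along the (Hausdorff-close) boundaries yields \eqref{dve}. The explicit constant $C_0=\sqrt{1+16/\sin^2\beta_0}$ arises from the estimate $|P^0_i-P^1_i|^2\leq d^2+(4d/\sin\beta_0)^2$, where $d$ bounds the perpendicular distance from $P^1_i$ to the nearest side of $\mathcal{P}^0$ and $4d/\sin\beta_0$ controls the displacement of the foot of this perpendicular along that side.

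The main obstacle is the topological step of identifying which connected component of $\der\mathcal{P}^1\cap B(P,\rho)$ must ``cross the corner'' at $P$. Making this rigorous requires a careful application of the Jordan curve theorem to see that $\der\mathcal{P}^1$ must separate points near $A$ and near $B$ in the same way as $\der\mathcal{P}^0$, and one has to handle the convex case $\beta<\pi$ and the reflex case $\beta>\pi$ in parallel; once this is in place, the remaining metric inequalities are elementary.
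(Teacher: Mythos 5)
The paper does not reproduce a proof of this proposition---it only cites Proposition 3.3 of \cite{BF}---so I can compare your proposal to the stated constants rather than to a written argument. Your overall route (show that every vertex of one polygon forces a nearby vertex of the other, using that a corner with $|\beta-\pi|\geq\beta_0$ cannot be $d_H$--approximated by straight chords inside a ball of radius $\rho=4d/\sin\beta_0$) is the natural one, and it does reproduce exactly the stated constants: $\rho\leq d_0/4$ is equivalent to $d\leq d_0\sin\beta_0/16$, and the term $K_0r_0$ in $\delta_0$ is what lets you use the Lipschitz graph structure \eqref{lip} to ensure $\der\mathcal{P}^0\cap B(P,\rho)$ really is just the two sides through $P$.

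Two points need tightening, though you flag both. First, the topological step: you need a chord of $\der\mathcal{P}^1$ in $B(P,\rho)$ whose endpoints are close to $A$ and $B$. This can be supplied without a full Jordan-curve argument. Since $\text{dist}(P,\der\mathcal{P}^1)\leq d$, some chord $c$ of $\der\mathcal{P}^1$ passes within $d$ of the center $P$, hence its endpoints on $\der B(P,\rho)$ are nearly antipodal (angular separation $\geq 2\arccos(d/\rho)$). Because all of $c$ lies within $d$ of $\der\mathcal{P}^0\cap B(P,\rho)=PA\cup PB$, its endpoints lie within $\arcsin(d/\rho)$ angularly of $A$ or $B$. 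Nearly antipodal endpoints cannot both lie near $A$ (or both near $B$), so one is near $A$ and one near $B$; this is your arc. One then either notes (your version) that its midpoint is within $d$ of $(A+B)/2$, which is at distance $(\rho/2)|\sin\beta|\geq 2d$ from $\der\mathcal{P}^0$, or (equivalently) that a chord from near $A$ to near $B$ is at distance $\geq\rho\cos(\beta/2)-d>d$ from $P$, contradicting $\text{dist}(P,c)\leq d$. Second, the equality case: the estimate $\geq 2d-d=d$ is not yet a strict contradiction of $d_H\leq d$; your fix via $\rho'=(4+\varepsilon)d/\sin\beta_0$ and $\varepsilon\to 0^+$ does work and precisely recovers $\delta_0=d_0\sin\beta_0/16$ in the limit. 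Finally, a small logical remark: once you have a vertex $P^1_i\in B(P^0_i,\rho)$, the bound $\text{dist}(P^0_i,P^1_i)\leq\rho=4d/\sin\beta_0$ is already slightly sharper than $C_0d=\sqrt{1+16/\sin^2\beta_0}\,d$, so the Pythagorean decomposition you describe is not the source of the constant but only a way of rewriting a weaker bound in the form stated in the proposition; it is harmless but worth being aware that you are not deriving anything new there.
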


By Corollary \ref{coro1},  Lemma \ref{lemma3.2} and Proposition \ref{prop3.3} we have the following
\begin{prop}\label{p3.4}(Proposition 3.4 in \cite{BF}) Under the same assumptions of Theorem \ref{mainteo} there exist positive constants $\ep_0$, $\alpha$, $C>1$ depending only on the a priori data, such that, if
\begin{equation*}
	\ep:= \|\Lambda_{\mathcal{P}^0}-\Lambda_{\mathcal{P}^1}\|_*<\ep_0
\end{equation*}
then $\mathcal{P}^0$ and $\mathcal{P}^1$ have the same number $N$ of vertices
$\{P^0_j\}_{j=1}^N$ and $\{P^1_j\}_{j=1}^N$, respectively.

Moreover, the vertices can be ordered so that 
\begin{equation}\label{stimarozza}dist(P^0_j,P^1_j)\leq \omega(\ep),\quad \forall j=1,\ldots,N,\end{equation}
where $\omega(\ep)=C|\ln\ep|^{-\alpha}$. 
\end{prop}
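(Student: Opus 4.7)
The plan is to chain together the three results immediately preceding the statement: Corollary \ref{coro1}, Lemma \ref{lemma3.2}, and Proposition \ref{prop3.3}. None of these steps individually is hard; the content of Proposition \ref{p3.4} is essentially their concatenation, with the parameter $\varepsilon_0$ chosen small enough that the applicability threshold of Proposition \ref{prop3.3} is met.

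First I would set $\varepsilon := \|\Lambda_{\mathcal{P}^0}-\Lambda_{\mathcal{P}^1}\|_*$ and, assuming $\varepsilon<1/2$ so that Corollary \ref{coro1} applies, obtain
\[
|\mathcal{P}^0\Delta\mathcal{P}^1|\;\leq\;\frac{C}{c_0}\,|\ln\varepsilon|^{-2\alpha}.
\]
Next, inserting this into Lemma \ref{lemma3.2} yields
\[
d_H(\partial\mathcal{P}^0,\partial\mathcal{P}^1)\;\leq\;C\sqrt{|\mathcal{P}^0\Delta\mathcal{P}^1|}\;\leq\;C\,|\ln\varepsilon|^{-\alpha},
\]
where the constant $C$ is relabelled and continues to depend only on the a priori data (including $c_0$).

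The third step is to invoke Proposition \ref{prop3.3}, which requires the hypothesis $d_H(\partial\mathcal{P}^0,\partial\mathcal{P}^1)\leq \delta_0$. Since $|\ln\varepsilon|^{-\alpha}\to 0$ as $\varepsilon\to 0^+$, I can choose $\varepsilon_0\in(0,1/2)$ small enough, depending only on the a priori data through $\delta_0$ and the constant in the previous display, so that $\varepsilon<\varepsilon_0$ implies $C|\ln\varepsilon|^{-\alpha}\leq \delta_0$. Explicitly one takes $\varepsilon_0=\min\{1/2,\exp(-(C/\delta_0)^{1/\alpha})\}$. For such $\varepsilon$, Proposition \ref{prop3.3} guarantees that $\mathcal{P}^0$ and $\mathcal{P}^1$ have the same number $N$ of vertices and, once ordered appropriately,
\[
\mathrm{dist}(P_j^0,P_j^1)\;\leq\;C_0\,d_H(\partial\mathcal{P}^0,\partial\mathcal{P}^1)\;\leq\;C_0C\,|\ln\varepsilon|^{-\alpha}
\]
for every $j=1,\dots,N$. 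Absorbing $C_0 C$ into a new constant yields the claimed bound $\omega(\varepsilon)=C|\ln\varepsilon|^{-\alpha}$.

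There is no real obstacle here: the argument is a direct bookkeeping exercise, and the only mild care needed is in the choice of $\varepsilon_0$, which must be quantified so that the geometric conclusions of Proposition \ref{prop3.3} (in particular the matching of the vertices) are legitimate. The exponent $\alpha$ is the one provided by Corollary \ref{coro1} and is inherited unchanged.
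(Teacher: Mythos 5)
Your proof is correct and is exactly the chaining of Corollary \ref{coro1}, Lemma \ref{lemma3.2}, and Proposition \ref{prop3.3} that the paper itself indicates (the paper states the result as following from those three and cites the analogous proposition in \cite{BF} rather than writing out the details). The choice of $\varepsilon_0$ so that $C|\ln\varepsilon|^{-\alpha}\leq\delta_0$ is the only nontrivial bookkeeping step, and you handle it correctly; the exponent $\alpha$ and the constants are inherited as you describe.
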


\section{The movement from $\mathcal{P}^0$ to $\mathcal{P}^1$}
In this section we assume that
\[d_H(\der\mathcal{P}^0,\der\mathcal{P}^1)\leq \frac{\delta_0}{5},\]
where $\delta_0$ is as in Proposition \ref{prop3.3} 
and let $\{P^0_j\}_{j=1}^N$ and $\{P^1_j\}_{j=1}^N$ be the vertices of $\mathcal{P}^0$ and $\mathcal{P}^1$.
Let us also consider the intersections of $\der\mathcal{P}^0$ with $\Sigma=\{\Sigma_1,\dots,\Sigma_{m-1}\}$ and denote them by $\{P^0_j\}_{j=N+1}^{N+k}$.  

Since,  by \eqref{dve},
\[dist(P^0_i,P^1_i)\leq C_0\frac{\delta_0}{5}\leq\frac{d_0\sin\beta_0}{80}\sqrt{1+\frac{16}{\sin^2\beta_0}}\leq\frac{\sqrt{17}}{80}<\frac{d_0}{2},\,\,i=1,\dots,N\] 
by assumption  \eqref{distanzainterfaccia}, for each side of $\mathcal{P}^0$ intersecting $\Sigma$, the corresponding side in $\mathcal{P}^1$ intersects $\Sigma$.

 This means that the intersections of $\der\mathcal{P}^1$ with $\Sigma$ are given by $\{P^1_j\}_{j=N+1}^{N+k}$ and 
\[dist(P^0_j,P^1_j)\leq C_0 d_H(\der\mathcal{P}^0,\der\mathcal{P}^1),\mbox{ for } j=N+1,\ldots,N+k.\]
Let us  reorder the indices in 
\[\{P^0_j\}_{j=1}^M\mbox{ and }\{P^1_j\}_{j=1}^M\]
such that $M=N+k\leq2N$,  $P^0_jP^0_{j+1}$ is a segment in $\der\mathcal{P}^0$,
and 
\[dist(P^0_j,P^1_j)\leq C_0d_H(\der\mathcal{P}^0,\der\mathcal{P}^1) \mbox{ for }j=1,\ldots,M.\]

The aim of this section is to define a suitable  function that transform $\gamma_{\mathcal{P}_0}$ into $\gamma_{\mathcal{P}_1}$.

Let $W\subset\subset\om$ be a tubular neighborhood of  $\der\mathcal{P}^0$ of width $\frac{d_0}{4}$ so that
\[dist(W,\der\om)\geq\frac{d_0}{2}\]
and $W\supset \der\mathcal{P}^0$. 

Let us now define a  map $\mathcal{U}:\RR^2\to\RR^2$
in the following way:
Let us extend $\mathcal{U}$  to a map in $W^{1,\infty}(\RR^2)$ in such a way that
\begin{equation}\label{i.4}
supp(\mathcal{U})\subset\overline{W}\end{equation}
\begin{equation}\label{i.3}
\mathcal{U}\mbox{ is piecewise affine continuous on the line segments } P^0_jP^0_{j+1}, j=1,\dots,M, \mbox{cf.  \cite{BFV17} }, \mbox{ and on } (W\cap\Sigma)\setminus\mathcal{P}^0\end{equation} 
\begin{equation}\label{i.31}
\mathcal{U}(P^0_j)=P^1_j-P^0_j\mbox{ for every }j=1,\ldots,M.\end{equation}

\begin{equation}\label{i.5}
|\mathcal{U}|+\frac{d_0}{8} |D\mathcal{U}| \leq C_0d_H(\der\mathcal{P}^0,\der\mathcal{P}^1).
\end{equation}

\begin{prop}\label{propphi}
The map 
\[\Phi_t=I+t\mathcal{U}.\]
has the following properties:
\begin{equation}\label{f1}
\Phi_t \mbox{ is piecewise affine continuous on }\der\mathcal{P}^0\cup \Sigma\setminus\mathcal{P}^0
\end{equation}
\begin{equation}\label{f2}
\Phi_t\in W^{1,\infty}(\Omega)\mbox{ is invertible and }\end{equation}
\[|D\Phi_t-I|,\,|D\Phi^{-1}_t-I|\leq Ctd_H(\der\mathcal{P}^0,\der\mathcal{P}^1)
\]
\begin{equation}\label{f3}
\Phi_t(\om_i\setminus\mathcal{P}^0)\subset\om_i\mbox{ for all }i=1,\dots,m
\end{equation}
\begin{equation}\label{f4}
\left|\frac{d}{dt}\Phi_t\right|,\,\left|\frac{d}{dt}\Phi^{-1}_t\right|\leq Cd_H(\der\mathcal{P}^0,\der\mathcal{P}^1)
\end{equation}
\begin{equation}\label{f5}
\left|\frac{d}{dt}D\Phi_t\right|,\,\left|\frac{d}{dt}D\Phi^{-1}_t\right|\leq Cd_H(\der\mathcal{P}^0,\der\mathcal{P}^1)
\end{equation}
\begin{equation}\label{f6}
\left|\frac{d}{dt}D\Phi^{-1}_t+D\mathcal{U}\right|\leq Ctd^2_H(\der\mathcal{P}^0,\der\mathcal{P}^1),\,\,\left|\frac{d}{dt}(D\Phi^{-1}_t)^T+D\mathcal{U}^T\right|\leq Ctd^2_H(\der\mathcal{P}^0,\der\mathcal{P}^1) 
\end{equation}
\end{prop}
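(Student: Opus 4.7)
Set $\delta:=d_H(\der\mathcal{P}^0,\der\mathcal{P}^1)$. My plan is to verify the six items in order, extracting everything from the explicit formula $\Phi_t=I+t\mathcal{U}$ combined with the structural assumptions \eqref{i.4}--\eqref{i.5} and the preliminary observation (made in the paragraph preceding \eqref{i.4}) that the vertices $P^0_j$, $j=N+1,\dots,M$ lying on some $\Sigma_i$ are paired with vertices $P^1_j$ on the same $\Sigma_i$.

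First, (f1) is immediate since the identity is globally affine and, by \eqref{i.3}, $t\mathcal{U}$ is piecewise affine continuous on $\der\mathcal{P}^0\cup ((\Sigma\setminus\mathcal{P}^0)\cap W)$, while outside $W$ we have $\Phi_t=I$. For (f2) I compute $D\Phi_t=I+tD\mathcal{U}$, and by \eqref{i.5} $\|tD\mathcal{U}\|_\infty\le 8tC_0\delta/d_0$; using $\delta\le \delta_0/5$ together with the explicit form of $\delta_0$ and $C_0$ from Proposition \ref{prop3.3}, this norm stays strictly below $1$ for all $t\in[0,1]$. Pointwise invertibility of $D\Phi_t$ and the bound $|D\Phi_t^{-1}-I|\le Ct\delta$ follow from the Neumann series $(I+tD\mathcal{U})^{-1}=\sum_{k\ge 0}(-tD\mathcal{U})^k$; global invertibility of $\Phi_t$ as a self-map of $\RR^2$ is obtained by solving $x=y-t\mathcal{U}(x)$ via the contraction mapping principle, noting that outside $W$ we have $\Phi_t=I$.

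For (f3), the key point, the paired vertices on $\Sigma$ give $\mathcal{U}(P^0_j)=P^1_j-P^0_j$ tangent to the corresponding $\Sigma_i$; since $\mathcal{U}$ is piecewise affine continuous along $(\Sigma\setminus\mathcal{P}^0)\cap W$ and vanishes on $\der W$, the entire trace of $\mathcal{U}$ on $(\Sigma\setminus\mathcal{P}^0)\cap W$ is tangent to $\Sigma$. Hence $\Phi_t$ maps $\Sigma\setminus\mathcal{P}^0$ into $\Sigma$; outside $W$ the map is the identity. Combined with the global bijectivity from (f2), a connectedness argument (the image of the open connected set $\Omega_i\setminus \mathcal{P}^0$ is open, connected, contained in $\Omega\setminus\Sigma$, and meets $\Omega_i$) gives $\Phi_t(\om_i\setminus\mathcal{P}^0)\subset\om_i$.

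Finally, (f4)--(f6) are differentiations of the explicit formulas. (f4): $\frac{d}{dt}\Phi_t=\mathcal{U}$ is bounded by \eqref{i.5}; differentiating the identity $\Phi_t\circ\Phi_t^{-1}=I$ in $t$ at fixed $y$ yields $\frac{d}{dt}\Phi_t^{-1}=-(D\Phi_t)^{-1}\mathcal{U}\circ\Phi_t^{-1}$, so the bound follows from (f2). (f5): $\frac{d}{dt}D\Phi_t=D\mathcal{U}$ is $O(\delta)$ by \eqref{i.5}; a.e.\ $D\mathcal{U}$ is locally constant on a region through $\Phi_t^{-1}(y)$, hence $D\Phi_t^{-1}(y)=(I+tD\mathcal{U})^{-1}$ there and
\[
\frac{d}{dt}D\Phi_t^{-1}=-(D\Phi_t)^{-1}D\mathcal{U}\,(D\Phi_t)^{-1},
\]
bounded again by $C\delta$. (f6): writing $(D\Phi_t)^{-1}=I+R_t$ with $|R_t|\le Ct\delta$ from (f2),
\[
\frac{d}{dt}D\Phi_t^{-1}+D\mathcal{U}=-R_t D\mathcal{U}-D\mathcal{U}\,R_t-R_t D\mathcal{U}\,R_t,
\]
each term of order $t\delta^2$; the transpose statement follows by transposing and using $|D\mathcal{U}^T|=|D\mathcal{U}|$. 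The only genuine obstacle is (f3): it requires the geometric pairing of boundary-interface intersections that was set up in the discussion preceding the statement; everything else is linear algebra and Neumann series manipulation.
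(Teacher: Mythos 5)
Your proof is correct and follows essentially the same route as the paper's: all six items are read off from $\Phi_t=I+t\mathcal{U}$, the bound $t\|D\mathcal{U}\|_\infty<1/2$ from \eqref{i.5} and $\delta_0/5$, and the tangency of $\mathcal{U}$ to $\Sigma$ at the paired interface points. You merely spell out what the paper asserts tersely (Neumann series in place of ``analyticity in $t$'', contraction mapping for global invertibility, the explicit $R_t$-cancellation in \eqref{f6}), so there is no substantive divergence.
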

\begin{proof} \eqref{f1} follows from the definition of $\mathcal{U}$.

In order to prove \eqref{f2} we notice that
\[|D\Phi_t-I|=t|D\mathcal{U}|\leq 8t\frac{C_0}{d_0}d_H(\der\mathcal{P}^0,\der\mathcal{P}^1)\]
hence
\[|D\Phi_t-I|\leq
 \frac{8C_0\delta_0}{5d_0}\leq \frac{\sqrt{17}}{10}<\frac{1}{2} \]
hence $\Phi_t$ is invertible for every $t\in[0,1]$.
Moreover, by the Implicit Map Theorem it follows that $D\Phi^{-1}_t(y)=(D\Phi_t)^{-1}(\Phi_t^{-1}(y))$ and the analyticity in the parameter $t$ of $(D\Phi_t)^{-1}$ gives     
\[|D\Phi^{-1}_t-I|\leq 8t\frac{C_0}{d_0}d_H(\der\mathcal{P}^0,\der\mathcal{P}^1).\]
Let us now prove \eqref{f3} noticing that if $P^0_j\in \Sigma$, then $\mathcal{U}(P^0_j)$ is parallel to $\Sigma$, hence $\Phi_t(\Sigma)\subset \Sigma$. The invertibility of the map $\Phi_t$ then gives   \eqref{f3}.

Estimates \eqref{f4} and \eqref{f5} follows directly from  \eqref{i.5} and finally we obtain \eqref{f6} again by analyticity of $(D\Phi_t)^{-1}$ with respect to $t$:
\[
\left|\frac{d}{dt}D\Phi^{-1}_t+D\mathcal{U}\right|\leq C t|D\mathcal{U}|^2\leq C td^2_H(\der\mathcal{P}^0,\der\mathcal{P}^1)
\]
and analogously, since $(D\Phi^{-1}_t(y))^T=(D\Phi_t)^{-T}(\Phi_t^{-1}(y))$
\[
\left|\frac{d}{dt}(D\Phi^{-1}_t)^T+D\mathcal{U}^T\right|\leq C t|D\mathcal{U}|^2\leq C td^2_H(\der\mathcal{P}^0,\der\mathcal{P}^1)
\]
which concludes the proof.
\end{proof}
\section{Differentiability properties of the Dirichlet to Neumann map}
Let $\Phi_t(x)=x+t\mathcal{U}(x)$ and let us define
\[\mathcal{P}^t=\Phi_t(\mathcal{P}^0)\]
and notice that, by \eqref{f2},
\[\gamma_{\mathcal{P}^t}(x)=\gamma_{\mathcal{P}^0}\left(\Phi_t^{-1}(x)\right) .\]
Notice that the notation is consistent, that is $\gamma_{\mathcal{P}^0}\left(\Phi_1^{-1}(x)\right)=\gamma_{\mathcal{P}_1}(x)$.
Let us denote by $u_t$ the solution to 
\begin{equation}\label{t}
    \left\{\begin{array}{rcl}
             \dive(\gamma_{\mathcal{P}^t}\nabla u_t) & = & 0\mbox{ in }\om, \\
             u_t&=&f\mbox{ on }\der\om,\\
           \end{array}
    \right.
\end{equation}

Let us define \[A(t)=\left(D\Phi_t^{-1}\right)\left(D\Phi_t^{-1}\right)^T\det\left(D\Phi_t\right)\]
and
\[\mathcal{A}={\frac{d}{dt}A(t)}_{|_{t=0}}=\dive(\mathcal{U})I-(D\mathcal{U}+D\mathcal{U}^T).\]
Let us now show a result corresponding to Lemma 2.1 in \cite{BMPS}.
\begin{lm}

The solution $u_t$ to problem \eqref{t} has a material derivative at $t=0$, $\dot{u}\in H^1_0(\om)$, that solves
\[\int_\om \gamma_{\mathcal{P}^0}\nabla \dot{u}\cdot \nabla \psi=-\int_\om\gamma_{\mathcal{P}^0}\mathcal{A}\nabla u_0\cdot\nabla\psi\]
for every $\psi\in H^1_0(\om)$.
\end{lm}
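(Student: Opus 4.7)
The plan is to carry out the standard change-of-variables argument for material derivatives of solutions to elliptic boundary value problems, adapted to the low-regularity setting at hand. First, I would pull back the equation for $u_t$ by the diffeomorphism $\Phi_t$: setting $v_t(y):=u_t(\Phi_t(y))$, which equals $f$ on $\der\om$ since $\Phi_t=I$ outside $W\subset\subset\om$, the weak formulation $\int_\om\gamma_{\mathcal{P}^t}\nabla u_t\cdot\nabla\phi=0$ becomes, after the change of variables $x=\Phi_t(y)$ and using $\gamma_{\mathcal{P}^t}\circ\Phi_t=\gamma_{\mathcal{P}^0}$,
\[\int_\om\gamma_{\mathcal{P}^0}(y)\,A(t)\nabla v_t\cdot\nabla\psi\,dy=0\qquad\forall\,\psi\in H^1_0(\om).\]

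Next, subtracting the $t=0$ equation and setting $w_t:=(v_t-u_0)/t\in H^1_0(\om)$, one obtains
\[\int_\om\gamma_{\mathcal{P}^0}A(t)\nabla w_t\cdot\nabla\psi\,dy=-\int_\om\gamma_{\mathcal{P}^0}\,\frac{A(t)-I}{t}\,\nabla u_0\cdot\nabla\psi\,dy.\]
Testing with $\psi=w_t$ and using that $A(t)$ is uniformly elliptic for $t$ small (from \eqref{f2}, since $|D\Phi_t^{-1}-I|\le Ct d_H$ and $\det D\Phi_t$ stays in a neighborhood of $1$) together with the uniform $L^\infty$-bound on $(A(t)-I)/t$ (immediate from \eqref{f5}-\eqref{f6}), one gets $\|\nabla w_t\|_{L^2}\le C\|\nabla u_0\|_{L^2}$ uniformly in $t$. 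I would then extract a weakly convergent subsequence $w_{t_n}\rightharpoonup \dot u$ in $H^1_0(\om)$ and pass to the limit using the uniform convergence $A(t)\to I$ and $(A(t)-I)/t\to\mathcal{A}$ in $L^\infty(\om)$, which follows from the analyticity in $t$ of $(D\Phi_t)^{-1}$ (cf.\ \eqref{f5}-\eqref{f6}) together with the explicit computation $\mathcal{A}=\dive(\mathcal{U})I-(D\mathcal{U}+D\mathcal{U}^T)$. This yields the stated identity
\[\int_\om\gamma_{\mathcal{P}^0}\nabla \dot u\cdot\nabla\psi=-\int_\om\gamma_{\mathcal{P}^0}\mathcal{A}\nabla u_0\cdot\nabla\psi.\]
Strong $H^1_0$-convergence, and hence convergence of the full family (since the limit is uniquely determined by the above elliptic problem via Lax–Milgram), is then obtained by testing the difference equation with $w_{t_n}-\dot u$.

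The main subtlety, compared with the smooth setting treated in \cite{BMPS}, is the low regularity of the data: $\mathcal{U}$ is only piecewise affine and Lipschitz, so $D\mathcal{U}\in L^\infty$ but is discontinuous across $\der\mathcal{P}^0\cup(\Sigma\setminus\mathcal{P}^0)$, and $\nabla u_0$ is merely $L^2$ with possible gradient blow-up at the vertices of $\mathcal{P}^0$ and at the intersections with the layer interfaces. Nevertheless, the estimates above use only $D\mathcal{U}\in L^\infty(\om)$ and $\nabla u_0\in L^2(\om)$, so no further regularity is required; this is precisely the advantage of working with the distributed volume formulation rather than with a boundary representation of the shape derivative.
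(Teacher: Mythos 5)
Your argument is correct and follows essentially the same route as the paper: pull back by $\Phi_t$, derive a uniform $H^1_0$ bound on the difference quotient via ellipticity of $A(t)$, extract a weak limit, and then upgrade to strong convergence. The two small differences are purely technical and both are legitimate simplifications: you avoid the paper's auxiliary lifting $\tilde{f}$ by observing directly that $v_t-u_0\in H^1_0(\om)$ since $\Phi_t=I$ near $\der\om$, and you obtain strong convergence (of the whole family) by testing the difference of the two variational identities with $w_t-\dot u$ rather than via the paper's energy-convergence argument.
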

\begin{proof}
Let $\tilde{f}\in H^1(\om)$ be an extension of $f$ such that $supp(\tilde{f})\cap supp(\mathcal{U})=\emptyset$ 
Let $w_t=u_t-\tilde{f}$ for $t\in[0,1]$. Notice that $w_t\in H^1_0(\om)$ and $\dive(\gamma_{\mathcal{P}^t}\nabla w_t)=-\dive(\gamma_{\mathcal{P}^t}\nabla\tilde{f})$ in $\om$, that is
\begin{equation}\label{2-6}\int_\om\gamma_{\mathcal{P}^t}\nabla w_t\cdot\nabla\psi=-\int_\om\gamma_{\mathcal{P}^t}\nabla\tilde{f}\cdot\nabla\psi\quad\forall \psi\in H^1_0(\om).\end{equation}
Let $\tilde{w}_t=w_t\circ\Phi_t$; since $\Phi_t(x)=x$ close to $\der\om$, we have that $\tilde{w}_t\in H^1_0(\om)$ satisfies
\begin{equation}\label{3-6}\int_\om\gamma_{\mathcal{P}_0}A(t)\nabla\tilde{w}_t\cdot\nabla\psi=-\int_\om\gamma_{\mathcal{P}_0}\nabla\tilde{f}\cdot\nabla\psi\quad\forall \psi\in H^1_0(\om).\end{equation}
where we have used the fact that $A=I$ on the support of $\tilde{f}$ and the fact that $\gamma_{\mathcal{P}_t}(\Phi^t(x))=\gamma_{\mathcal{P}^0}(x)$.
By subtracting \eqref{2-6} for $t=0$ from \eqref{3-6}  and dividing by $t$ we get
\begin{equation}\label{4-6}
\int_\om\gamma_{\mathcal{P}^0}A(t)\nabla\left(\frac{\tilde{w}_t-w_0}{t}\right)\cdot\nabla\psi=\int_\om\gamma_{\mathcal{P}^0}\frac{I-A(t)}{t}\nabla w_0\cdot\nabla\psi
\quad\forall \psi\in H^1_0(\om).\end{equation}


Now we proceed as in the proof of Lemma 2.1 in \cite{BMPS}. Applying \ref{propphi} we have that $\|A(t)\|_{L^{\infty}(\Omega)}\geq C$ for all $t\in [0,1]$  and that $A(t)$ is differentiable at $t=0$. Hence,  choosing 
$\psi=\frac{\tilde{w}_t-w_0}{t}\in H^1_0(\om)$ as test function and applying Cauchy-Schwarz inequality we get
\[\left\|\nabla\left(\frac{\tilde{w}_t-w_0}{t}\right)\right\|_{L^2(\om)}\leq C\|\nabla w_0\|_{L^2(\om)}\]
which means that
\[\frac{\tilde{w}_t-w_0}{t}\mbox{ is bounded in }H^1_0(\om)\]
therefore there is a weakly convergent sequence in $H^1_0(\om)$ and its weak limit is the material derivative $\dot{w}$ of $w$.
By passing to the limit in \eqref{4-6} we get that
\begin{equation}\label{2-7}\int_\om\gamma_{\mathcal{P}^0}\nabla \dot{w}\cdot\nabla\psi=-\int_\om\gamma_{\mathcal{P}^0}\mathcal{A}\nabla w_0\cdot\nabla\psi\quad\forall \psi\in H^1_0(\om).\end{equation}
We also have strong convergence. In fact,  if we take $\psi=\frac{\tilde{w}-w_0}{t}$ in \eqref{4-6}  we get
\begin{eqnarray*}&&\int_\om\gamma_{\mathcal{P}^0}A(t)\nabla\left(\frac{\tilde{w}_t-w_0}{t}\right)\cdot\nabla\left(\frac{\tilde{w}_t-w_0}{t}\right)
=\int_\om\gamma_{\mathcal{P}^0}\frac{I-A(t)}{t}\nabla w_0\cdot\nabla \left(\frac{\tilde{w}_t-w_0}{t}\right)\\
\end{eqnarray*}
and, using weak convergence of $\frac{\tilde{w}_t-w_0}{t}$ in $H^1(\Omega$) strong convergence of $\frac{I-A(t)}{t}\nabla w_0$ in $L^2(\Omega)$ and by  \eqref{2-7} where we have chosen $\dot{w}$ as test function, we obtain
\[\lim_{t\to 0}\int_\om\gamma_{\mathcal{P}^0}A(t)\nabla\left(\frac{\tilde{w}_t-w_0}{t}\right)\cdot\nabla\left(\frac{\tilde{w}_t-w_0}{t}\right)=-\int_\om\gamma_{\mathcal{P}^0}\mathcal{A}\nabla w_0\cdot\nabla\dot{w}=\int_\om\gamma_{\mathcal{P}^0}\nabla \dot{w}\cdot\nabla\dot{w}.\]
Finally, by
\begin{eqnarray*}
\int_\om\gamma_{\mathcal{P}^0}\left|\nabla\left(\frac{\tilde{w}_t-w_0}{t}\right)\right|^2&=&\int_\om\gamma_{\mathcal{P}^0}A(t)\nabla\left(\frac{\tilde{w}_t-w_0}{t}\right)\cdot\nabla\left(\frac{\tilde{w}_t-w_0}{t}\right)\\&&+\int_\om\gamma_{\mathcal{P}^0}(I-A(t))\nabla\left(\frac{\tilde{w}_t-w_0}{t}\right)\cdot\nabla\left(\frac{\tilde{w}_t-w_0}{t}\right)
\end{eqnarray*}
and since
\[
\int_\om\gamma_{\mathcal{P}^0}(I-A(t))\nabla\left(\frac{\tilde{w}_t-w_0}{t}\right)\cdot\nabla\left(\frac{\tilde{w}_t-w_0}{t}\right)\leq Ct
\]
we obtain 
\[\left\|\sqrt{\gamma_{\mathcal{P}^0}}\nabla\left(\frac{\tilde{w}_t-w_0}{t}\right)\right\|^2_{L^2(\Omega)}\rightarrow \|\sqrt{\gamma_{\mathcal{P}^0}}\nabla\dot{w}\|^2_{L^2(\Omega)}.
\]
This last convergence with the weak convergence in $H^1(\Omega)$ and coercivity finally gives 
\[
\left\|\nabla\left(\frac{\tilde{w}_t-w_0}{t}-\dot{w}\right)\right\|_{L^2(\Omega)}\rightarrow 0
\]
as $t\rightarrow 0$.
Finally, by Poincar\'e inequality, $\frac{\tilde{w}_t-w_0}{t}\to \dot{w}$ in $H^1_0(\om)$.

Let us now go back to $u_t$. Since $u_t=w_t+\tilde{f}$ and $\tilde{u}_t=\tilde{w}_t+\tilde{f}(\Phi_t)$, we have
\[\frac{\tilde{u}_t-u_0}{t}=\frac{\tilde{w}_t-w_0}{t}+\frac{\tilde{f}(\Phi_t)-\tilde{f}}{t}.\]
But, we notice that $\tilde{f}(\Phi_t)-\tilde{f}\equiv 0$ in $\om$, since $supp(\tilde{f})\subset\{x\,:\,\Phi_t(x)=x\}$, hence $\dot{u}=\dot{w}$. 
\end{proof}
\subsection{Derivative of the forward map}
Let us now evaluate the Gateaux derivative of the DN map along the direction of the vector field $\mathcal{U}$. 

Let $f,g\in H^{1/2}(\der\om)$ and let $u_t$ solve \eqref{t} and $v_t$ be the unique solution to
\begin{equation}\label{vt}
    \left\{\begin{array}{rcl}
             \dive(\gamma_{\mathcal{P}^t}\nabla v_t) & = & 0\mbox{ in }\om, \\
             v_t&=&g\mbox{ on }\der\om.

           \end{array}
    \right.
\end{equation}
Let 
\begin{eqnarray*}F(t,f,g)&=&<\Lambda_{\mathcal{P}^t}f,g>=\int_\om\gamma_{\mathcal{P}^t}\nabla u_t \cdot \nabla v_t=<\gamma_b\frac{\der u_t}{\der n},g>
\end{eqnarray*}
and analogoulsly let
\[F(0,f,g)=<\Lambda_{\mathcal{P}_0}f,g>=\int_\om\gamma_{\mathcal{P}^0}\nabla u_0 \cdot \nabla v_0=<\gamma_b\frac{\der u_0}{\der n},g>.
\]
Hence,
\[
\frac{F(t,f,g)-F(0,f,g)}{t}=<\gamma_b\frac{\der }{\der n}\left(\frac{u_t-u_0}{t}\right),g>.
\]
Since $\gamma_b\frac{\der u_t}{\der n}=\gamma_b\frac{\der \tilde{u}_t}{\der n}$ because $supp(\mathcal{U})$ is far from $\der\om$, and by 
\[\frac{\tilde{u}_t-u_0}{t}\to\dot{u}\]
in $H^1(\om)$, we get
\[\frac{F(t,f,g)-F(0,f,g)}{t}\to\,\, <\gamma_{\mathcal{P}^0}\frac{\der \dot{u}}{\der n},g>
=\int_\om\gamma_{\mathcal{P}^0}\nabla\dot{u}\cdot\nabla v_0=-\int_\om\gamma_{\mathcal{P}^0}\mathcal{A}\nabla u_0\cdot\nabla v_0\]
as $t\rightarrow 0$. Hence
\[F^\prime(0,f,g)=-\int_\om\gamma_{\mathcal{P}^0}\mathcal{A}\nabla u_0\cdot\nabla v_0,\]
where we denote by $F^\prime$ the derivative with respect to $t$.

\begin{rem}
Using a similar argument, one can show that $F(t,f,g)$ is differentiable for any $t_0\in [0,1]$ and that 
\[F^\prime(t_0,f,g)=-\int_\om\gamma_{\mathcal{P}^{t_0}}\mathcal{A}_{t_0}\nabla u_{t_0}\cdot\nabla v_{t_0}\]
for 
\[\mathcal{A}_{t_0}=\frac{d}{dt}\left(D\Phi_{t_0,t}^{-1}\right)\left(D\Phi_{t_0,t}^{-1}\right)^T\det\left(D\Phi_{t_0,t}\right)|_{t=t_0}\]
where $\Phi_{t_0,t}=I+t\mathcal{U}_{t_0}$ and  $\mathcal{U}_{t_0}$ is a $W^{1,\infty}(\Omega)$ map satisfying (\ref{i.4}), (\ref{i.5}) and such that $\mathcal{U}_{t_0}(P_j^0+t_0(P_j^1-P^0_j))=P_j^1-P^0_j\,\, \forall j=1, \dots, M$ and finally, $u_{t_0},v_{t_0}$ are solutions of (\ref{t}), (\ref{vt}) respectively for conductivity $\gamma_{\mathcal{P}_{t_0}}$.

\end{rem}
\subsection{Continuity of the Gateaux derivative}
We now want to prove the following result
\begin{prop}
There exist constants $C$, $\beta>0$ depending only on the a priori data, such that for all $t\in [0,1]$
\begin{equation}\label{contder}\left|F'(t,f,g)-F'(0,f,g)	\right|\leq C\|f\|_{H^{1/2}(\der\om)}\|g\|_{H^{1/2}(\der\om}t^\beta d^{1+\beta}_H(\der\mathcal{P}^0,\der\mathcal{P}^1).\end{equation}

\end{prop}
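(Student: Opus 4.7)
The plan is to pull $F'(t,f,g)$ back to the reference configuration $\mathcal{P}^0$ via $\Phi_t$ and then compare it factor-by-factor with $F'(0,f,g)$. Setting $\tilde u_t:=u_t\circ\Phi_t$, $\tilde v_t:=v_t\circ\Phi_t$, and taking the natural vector field $\mathcal{U}_t(y):=\mathcal{U}(\Phi_t^{-1}(y))$ (which corresponds to the one-parameter family $\Phi_{t,s}=\Phi_{t+s}\circ\Phi_t^{-1}$), the change of variables $y=\Phi_t(x)$ in the representation from the Remark turns it into
\[
F'(t,f,g)=-\int_\Omega\gamma_{\mathcal{P}^0}(x)\,\tilde M(t,x)\,\nabla\tilde u_t\cdot\nabla\tilde v_t\,dx,
\]
with $\tilde M(t)=\det(D\Phi_t)\,D\Phi_t^{-1}\,\mathcal{A}_t(\Phi_t)\,D\Phi_t^{-T}$, so that $\tilde M(0)=\mathcal{A}$, $\tilde u_0=u_0$, $\tilde v_0=v_0$. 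I would then decompose
\[
F'(t)-F'(0)=-\int_\Omega\gamma_{\mathcal{P}^0}\Bigl[(\tilde M(t)-\mathcal{A})\nabla u_0\cdot\nabla v_0+\tilde M(t)\nabla(\tilde u_t-u_0)\cdot\nabla\tilde v_t+\tilde M(t)\nabla u_0\cdot\nabla(\tilde v_t-v_0)\Bigr]dx
\]
and handle the three contributions separately.

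For the first contribution, I would Taylor expand $\tilde M(t)$ at $t=0$ piecewise on $\Omega\setminus\der\mathcal{P}^0$. Writing $N:=D\mathcal{U}$, the ingredients $\det(D\Phi_t)=1+t\,\text{tr}(N)+O(t^2|N|^2)$, $D\Phi_t^{-1}=I-tN+O(t^2|N|^2)$, and the identity $D\mathcal{U}_t(\Phi_t(x))=N(x)(I+tN(x))^{-1}$ (which yields $\mathcal{A}_t(\Phi_t(x))-\mathcal{A}=-t[\text{tr}(N^2)I-N^2-(N^2)^T]+O(t^2|N|^3)$) combine to give
\[
\tilde M(t)-\mathcal{A}=t\bigl[\text{tr}(N)\mathcal{A}-N\mathcal{A}-\mathcal{A}N^T-(\text{tr}(N^2)I-N^2-(N^2)^T)\bigr]+O(t^2|N|^3).
\]
Every term in the bracket carries at least two factors of $N$: one from expanding $\det$, $D\Phi_t^{\pm1}$ or $\mathcal{A}_t(\Phi_t)$, and another from $\mathcal{A}=\dive(\mathcal{U})I-(N+N^T)$ (which is itself linear in $N$). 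Since $\|N\|_{L^\infty}\le C\,d_H(\der\mathcal{P}^0,\der\mathcal{P}^1)$ by \eqref{i.5}, this yields $\|\tilde M(t)-\mathcal{A}\|_{L^\infty}\le C t\,d_H^2$, and combined with the standard bounds $\|\nabla u_0\|_{L^2},\|\nabla v_0\|_{L^2}\le C\|f\|_{H^{1/2}(\der\Omega)},\,C\|g\|_{H^{1/2}(\der\Omega)}$ the first term is controlled by $C t\,d_H^2\,\|f\|_{H^{1/2}}\|g\|_{H^{1/2}}$. For the second and third contributions I would use $\|\tilde M(t)\|_{L^\infty}\le C d_H$ together with the energy estimate
\[
\|\nabla(\tilde u_t-u_0)\|_{L^2(\Omega)}\le C t\,d_H\,\|f\|_{H^{1/2}(\der\Omega)},
\]
obtained by testing the difference equation for $\tilde u_t-u_0$ against itself and invoking $\|A(t)-I\|_{L^\infty}\le C t\,d_H$ from Proposition \ref{propphi}, exactly as in the proof of the preceding lemma (with the analogous bound for $\tilde v_t-v_0$). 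Each of these two terms is thus also bounded by $C t\,d_H^2\|f\|_{H^{1/2}}\|g\|_{H^{1/2}}$, and altogether \eqref{contder} follows with $\beta=1$.

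The main obstacle is precisely the algebraic gain of one extra factor of $d_H$ in the first contribution: without it, the naive estimate would give $\|\tilde M(t)-\mathcal{A}\|_{L^\infty}\le C t\,d_H$ and only $C t\,d_H\|f\|\|g\|$ for that term, which fails to match $t^\beta d_H^{1+\beta}$. The improvement hinges on the fact that $\mathcal{A}$ is already linear in $D\mathcal{U}$ and that every $t$-derivative of the ingredients building $\tilde M(t)$ produces an additional factor of $D\mathcal{U}$. Uniformity of all constants across the admissible class $\inspol$ is guaranteed by Proposition \ref{propphi}, which ensures that $\Phi_t$ remains a bi-Lipschitz diffeomorphism with quantitative constants depending only on the a priori data.
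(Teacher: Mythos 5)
Your proof is correct, but it takes a genuinely different route than the paper's. The paper decomposes $F'(t)-F'(0)$ without pulling back: it writes $F'(t)-F'(0)=I_1+I_2+I_3+I_4$ where $I_1=-\int_\Omega(\gamma_{\mathcal{P}^t}-\gamma_{\mathcal{P}^0})\mathcal{A}\nabla u_0\cdot\nabla v_0$ carries the change of conductivity on the symmetric difference $\mathcal{P}^t\triangle\mathcal{P}^0$, and $I_3,I_4$ compare $u_t$ with $u_0$ (resp.\ $v_t$ with $v_0$) across the two different conductivities. Those three terms are then handled by Meyers' higher integrability ($\nabla u_0,\nabla v_0\in L^p$, $p>2$) together with H\"older on the set $\mathcal{P}^t\triangle\mathcal{P}^0$, which is what produces the fractional exponent $\beta=\tfrac12-\tfrac1p$. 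Your approach instead chooses the concrete velocity field $\mathcal{U}_t=\mathcal{U}\circ\Phi_t^{-1}$ and pulls the whole expression for $F'(t)$ back to the fixed domain, so every comparison is made against the single conductivity $\gamma_{\mathcal{P}^0}$: the $I_1$-type term vanishes, the $\mathcal{A}_t$-term becomes the Taylor expansion of $\tilde M(t)$ (where the crucial gain of one extra factor of $D\mathcal{U}$ comes from $\mathcal{A}$ being already linear in $D\mathcal{U}$), and $\tilde u_t-u_0\in H^1_0(\Omega)$ is controlled by the plain $L^2$ energy estimate via $\|A(t)-I\|_{L^\infty}\le Ct\,d_H$, with no need for Meyers' theorem. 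This yields the cleaner rate $\beta=1$, strictly better than the paper's $\tfrac12-\tfrac1p<\tfrac14$, though either is enough for the Lipschitz stability argument downstream. Your identification $\mathcal{A}_t\circ\Phi_t$ via $D\mathcal{U}_t(\Phi_t(x))=N(x)(I+tN(x))^{-1}$ and the verification that this particular $\mathcal{U}_t$ is an admissible velocity field for the Remark (correct vertex values, $(\ref{i.4})$--$(\ref{i.5})$ up to uniform constants) are the points to state explicitly if you were to write this out in full, but the outline is sound.
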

\begin{proof}
To simplify the notation throughout the proof we will use  $d_H:=d_{H}\left(\der \mathcal{P}_0,\der\mathcal{P}_1\right)$ 
\begin{eqnarray*}
F'(t,f,g)-F'(0,f,g)&=&-\int_\om\gamma_{\mathcal{P}^t}\mathcal{A}_t\nabla u_t\cdot\nabla v_t+\int_\om\gamma_{\mathcal{P}^0}\mathcal{A}\nabla u_0\cdot\nabla v_0\\
&=&-\int_\om(\gamma_{\mathcal{P}^t}-\gamma_{\mathcal{P}^0})\mathcal{A}\nabla u_0\cdot\nabla v_0-\int_\om\gamma_{\mathcal{P}^t}(\mathcal{A}_t-\mathcal{A})\nabla u_t\cdot\nabla v_t-\\
&-&\int_\om\gamma_{\mathcal{P}^t}\mathcal{A}\nabla (u_t-u_0)\cdot\nabla v_0-\int_\om\gamma_{\mathcal{P}^t}\mathcal{A}\nabla u_t\cdot\nabla (v_t-v_0)\\
&=&I_1+I_2+I_3+I_4
\end{eqnarray*}
Let us start estimating $I_1$
\begin{eqnarray}\label{sum}
|I_1|&=&\left|\int_{\mathcal{P}^t\triangle\mathcal{P}^0}(k-\gamma_b)\mathcal{A}\nabla u_0\cdot\nabla v_0\right|\\
&\leq& C\|\mathcal{A}\|_{L^{\infty}(\Omega)}\|\nabla u_0\|_{L^2(\mathcal{P}^t\triangle\mathcal{P}^0)}\|\nabla v_0\|_{L^2(\mathcal{P}^t\triangle\mathcal{P}^0)}.
\end{eqnarray}
By Meyer's theorem, $u_0,v_0\in W^{1,p}(\om)$ with $p>2$ so that, applying H\"older inequality we get
\[
\|\nabla u_0\|^2_{L^2(\mathcal{P}^t\triangle\mathcal{P}^0)}\leq |\mathcal{P}^t\triangle\mathcal{P}^0|^{1-\frac{2}{p}}\|\nabla u_0\|^{\frac{2}{p}}_{L^p(\om)}
\]
which implies
\begin{equation}\label{gradu0}
\|\nabla u_0\|_{L^2(\mathcal{P}^t\triangle\mathcal{P}^0)}\leq Ct^{\frac{1}{2}-\frac{1}{p}}d^{\frac{1}{2}-\frac{1}{p}}_H\|\nabla u_0\|_{L^p(\om)}
\leq Ct^{\frac{1}{2}-\frac{1}{p}}d^{\frac{1}{2}-\frac{1}{p}}_H\|f\|_{H^{1/2}(\der\om)}
\end{equation}
and analogously 
\begin{equation}\label{gradv0}
\|\nabla v_0\|_{L^2(\mathcal{P}^t\triangle\mathcal{P}^0)}\leq Ct^{\frac{1}{2}-\frac{1}{p}}d^{\frac{1}{2}-\frac{1}{p}}_H\|g\|_{H^{1/2}(\der\om)}.
\end{equation}
Moreover, by (\ref{i.5}) $\|\mathcal{A}\|_{L^{\infty}(\om)}\leq Cd_H$.
Hence,
\begin{equation}\label{I1}
|I_1|\leq Ct^{1-\frac{2}{p}}d^{2-\frac{2}{p}}_H.
\end{equation}
For estimating $I_2$ we note that by (\ref{i.5}) and Proposition \ref{propphi} we have that \begin{equation}\label{At}
\|\mathcal{A}_t-\mathcal{A}\|_{L^{\infty}(\om)}\leq Ct d^{2}_H.
\end{equation}
In fact, let us rewrite
\begin{eqnarray*}
\mathcal{A}_t-\mathcal{A}&=&\frac{d}{dt}det D\Phi_t D\Phi_t^{-1}D\Phi_t^{-T}-div\,\mathcal{U}I\\
&+&det D\Phi_t \left(\frac{d}{dt}D\Phi^{-1}_t\right)D\Phi_t^{-T}+D\mathcal{U}+det D\Phi_t D\Phi^{-1}_t\left(\frac{d}{dt}D\Phi_t^{-T}\right)+D\mathcal{U}^T\\
&=&J_1+J_2+J_3.
\end{eqnarray*}
From Proposition \ref{propphi} and applying (\ref{i.5}) we have the following bound on $J_1$ and we get
\begin{equation}\label{J1}
|J_1|\leq C\left(\left|\frac{d}{dt}det D\Phi_t-div\,\mathcal{U}I\right|+div\,\mathcal{U}(|D\Phi^{-1}_t-I|+|D\Phi^{-T}_t-I|)\right)\leq Ctd^{2}_H
\end{equation}
and again by Proposition\ref{propphi} we derive
\begin{equation}\label{J2}
|J_2|\leq C(\|D\mathcal{U}\|_{L^{\infty}(\om)}|D\Phi^{-T}_t-I|+|\frac{d}{dt}D\Phi^{-1}_t+D\mathcal{U}|)\leq C td^{2}_H
\end{equation}
and arguing as for $J_2$
\begin{equation}\label{J3}
|J_3|\leq C td^{2}_H.
\end{equation}
Collecting (\ref{J1}),(\ref{J2}) and (\ref{J3}) we have (\ref{At}) which gives
\begin{equation}\label{I2}
|I_2|\leq Ctd^{2}_H
\|f\|_{H^{1/2}(\der\om)}\|g\|_{H^{1/2}(\der\om)}.
\end{equation}
For the last two terms $I_3,I_4$ we note that
\begin{equation}\label{I3}
|I_3|\leq C \|\mathcal{A}\|_{L^{\infty}(\om)}\|u_t-u_0\|_{L^2(\om)}\|v_0\|_{L^2(\om)}\leq Ct^{\frac{1}{2}-\frac{1}{p}}d^{\frac{3}{2}-\frac{1}{p}}_H\|f\|_{H^{1/2}(\der\om)}\|g\|_{H^{1/2}(\der\om)}
\end{equation}
where we have used the estimate
\[
\|u_t-u_0\|_{L^2(\om)}\leq C \|\nabla u_0\|_{L^2(\mathcal{P}^t\triangle\mathcal{P}^0)}
\]
and the bound (\ref{gradu0}).
Analogously,
\begin{equation}\label{I4}
|I_4|\leq C \|\mathcal{A}\|_{L^{\infty}(\om)}\|v_t-v_0\|_{L^2(\om)}\|u_0\|_{L^2(\om)}\leq Ct^{\frac{1}{2}-\frac{1}{p}}d^{\frac{3}{2}-\frac{1}{p}}_H\|f\|_{H^{1/2}(\der\om)}\|g\|_{H^{1/2}(\der\om)}
\end{equation}
Finally, collecting (\ref{I1}), (\ref{I2}),(\ref{I3}),(\ref{I4}) and setting $\beta=\frac{1}{2}-\frac{1}{p}>0$ the claim follows.

\end{proof}
\subsection{Bound from below for the derivative}
In order to simplify the exposition of the proof of the lower bound for the derivative we will assume that $\Omega$ contain only one interior layer $\Sigma=(-L,L)\times\{0\}$  and we denote with $\om_-=(-L,L)\times(-L,0)$ and $\om_+=(-L,L)\times(0,L)$ the two layers with corresponding conductivity $\gamma_-$ and $\gamma_+$. 
In order to perform the estimate from below of the derivative of $F$ we need to state some regularity result for solutions of the equation in stratified media. The proposition below is a special case of Proposition 1.6 in \cite{LN}. 



\begin{prop}\label{LiNir}
Let $B_r$ be the disk of radius $r>0$ centered at the origin, and let $B_r^\pm$ be the upper and the lower half disk and let $\gamma^1$ and $\gamma^2$ be two positive constants.

Let $v\in H^1(B_r)$ be a solution to
\begin{equation}\label{sal}\dive \left(\left(\gamma^1+(\gamma^2-\gamma^1)\chi_{B_r^+}\right)\nabla v\right)=0 \mbox{ in }B_r.
\end{equation}
Then $v\in C^\infty\left(\overline{B_r^+}\right)\cap C^\infty\left(\overline{B_r^-}\right)$ and for every $\delta>0$ there is a constant $C$ depending only on $\gamma^1$, $\gamma^2$ and $\delta$ such that
\begin{equation}\label{linir}
    \|\nabla v\|_{L^\infty(B_{(1-\delta)r})}\leq C\|v\|_{L^2(B_r)}.
\end{equation}
\end{prop}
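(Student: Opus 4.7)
The plan is to combine standard interior elliptic regularity on each open half-disk with a transmission regularity argument across the flat interface $\Sigma=B_r\cap\{y=0\}$, in the spirit of the Li--Nirenberg paper the authors are citing. The key observation is that the coefficient $\gamma(y)=\gamma^1\chi_{\{y<0\}}+\gamma^2\chi_{\{y>0\}}$ depends only on the normal variable $y$, so the equation is translation-invariant in the tangential direction $x$, and the standard weak formulation encodes the classical transmission conditions $v^+=v^-$ and $\gamma^2\partial_y v^+=\gamma^1\partial_y v^-$ on $\Sigma$.

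First I would establish $L^2$-control of all tangential derivatives. A Caccioppoli inequality on $B_r$, using cutoffs that vanish near $\partial B_r$ but are unrestricted across $\Sigma$, gives $\|\nabla v\|_{L^2(B_{r_1})}\le C\|v\|_{L^2(B_r)}$ for $r_1<r$, with $C$ depending only on $\gamma^1,\gamma^2$ and $r-r_1$. Because the equation commutes with $x$-translations, the difference quotient $D_x^h v$ is again a weak solution in a slightly smaller disk, and applying the same Caccioppoli estimate to it yields $\partial_x v\in H^1_{\mathrm{loc}}$; iterating produces $\partial_x^k v\in H^1(B_{r_k})$ with quantitative bounds in terms of $\|v\|_{L^2(B_r)}$ for any $k$.

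Next I would recover the normal regularity from the equation itself. On each open half-disk the coefficient is constant, so $v$ is harmonic and in particular $\partial_{yy}v=-\partial_{xx}v$; combined with the tangential bounds from the previous step this gives $L^2$ (and then, by iteration on the equation $\partial_{yy}^{k+1}v=-\partial_{xx}\partial_{yy}^{k-1}v$ differentiated in $x$ as well, arbitrary) control of all derivatives of $v$ separately on each $\overline{B_{r'}^\pm}$. The transmission conditions provide the matching of $v$ and of $\gamma\partial_y v$ on $\Sigma$, so that the restrictions $v|_{B_r^\pm}$ extend to functions in $C^\infty(\overline{B_{r'}^\pm})$. The pointwise estimate \eqref{linir} then follows by Sobolev embedding from the $H^k$-bounds for $k$ large enough, combined with a standard covering argument that absorbs the factor $(1-\delta)$.

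The main obstacle, and the reason the authors simply cite \cite{LN}, is making the constants depend only on $\gamma^1,\gamma^2,\delta$ uniformly; the delicate point is the Caccioppoli estimate and the difference-quotient argument at step one, which must be carried out with test functions that cross $\Sigma$ so that the discontinuity of $\gamma$ does not spoil the coercivity. Once this uniform tangential $H^k$-bound is available, the rest of the argument is essentially bookkeeping through the equation.
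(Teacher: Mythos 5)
Your proposal is correct and, unlike the paper, it is an actual proof: the paper gives none, simply observing that the statement is a special case of Proposition 1.6 in \cite{LN}, which handles the much more general situation of curved interfaces and merely H\"older continuous coefficients. What you have written out is the standard flat-interface, piecewise-constant specialization of that argument: Caccioppoli with cutoffs crossing $\Sigma$, tangential difference quotients exploiting translation invariance in $x$ to get arbitrary $H^k$ control of $\partial_x^k v$, recovery of normal derivatives from the identity $\partial_{yy}v=-\partial_{xx}v$ valid in each open half-disk where $v$ is harmonic, and Sobolev embedding on the Lipschitz domains $B_{r'}^{\pm}$. All the steps are sound and the constants do indeed depend only on the ellipticity constants $\gamma^1,\gamma^2$ and the cutoff parameter $\delta$, as you note. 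Two small remarks. First, the embedding you need for \eqref{linir} is $H^{3}(B_{r'}^{\pm})\hookrightarrow C^{1}(\overline{B_{r'}^{\pm}})$ rather than $H^2$, since in dimension two $H^2$ only gives $C^{0,\alpha}$; your iterated estimates supply arbitrarily high $H^k$ bounds, so this is harmless, but say so. Second, I would rephrase your closing paragraph: for a flat interface and piecewise constant coefficients the difference-quotient step is entirely routine and presents no delicate coercivity issue; the genuine difficulty in \cite{LN}, and the real reason the authors cite rather than reprove, is the curved interface and the low regularity of the coefficients, neither of which occurs here. The trade-off is clear: the citation buys brevity, while your self-contained argument shows that in this special geometry the result is elementary and makes the dependence of the constant on $\gamma^1,\gamma^2,\delta$ explicit.
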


Let 
\[\Omega_0=\Omega\cup \left\{(-d_0,d_0)\times [L,L+2d_0]\right\}\]
and let us extend $\gamma_{\mathcal{P}^0}$ to $\Omega_0$ by setting $\gamma_{\mathcal{P}^0}=\gamma_+$ in $(-d_0,d_0)\times [L,L+2d_0]$.

We now state some useful estimates of  the Green function $G_0(x,y)$ corresponding to the operator $\dive\left(\gamma_{\mathcal{P}^0}\nabla\cdot\right)$ and to the domain $\Omega_0$.
Let $y\in\Omega_0\backslash\mathcal{P}^0$ and let $0<r<\text{dist}(y,\{P_i^0\}_{j=1}^M\cup\partial \Omega_0)$. Then, in the ball $B_{r}(y)$ either $\gamma_{\mathcal{P}^0}$ is constant or $\gamma_{\mathcal{P}^0}=\gamma_b$,
for a  suitable choice of the coordinate system,
$\gamma_{\mathcal{P}^0}=\gamma_{+} +(k-\gamma_{+})\chi_{\{x_2>a\}} $ or $\gamma_{\mathcal{P}^0}=\gamma_{-} +(k-\gamma_{-})\chi_{\{x_2>a\}} $ for some some $a$ with $|a|<r$.
Let
\[
   \\
   \gamma_y= \left\{\begin{array}{rcl}
            &\gamma_{+} \text{ or }\gamma_{-} \quad&\text{ if }\gamma_{\mathcal{P}^0}=\gamma_{+}\text{ or }\gamma_{-}\text{ in }B_{r}(y),\\
             &\gamma_b\quad&\text{ if }\gamma_{\mathcal{P}^0}=\gamma_b,\text{ in }B_{r}(y),\\ 
             &\gamma_{+} +(k-\gamma_{+})\chi_{\{x_2>a\}}\text{ or }\gamma_{-} +(k-\gamma_{-})\chi_{\{x_2>a\}} & \text{ otherwise},
           \end{array}\right.
\]

and consider the bi-phase fundamental solution to
\[
 \dive\left(\gamma_y\nabla\overline{\Gamma}(\cdot, y)\right)=\delta_y\text{ in }\RR^2.
\]
\begin{prop}\label{Green}
There exists a constant $C>0$ depending only on the a-priori data such that 
for $y\in\Omega_0\backslash \mathcal{P}^0$ and $dist(y,\{P_i^0\}_{j=1}^M\cup\der\Omega_0)\geq d_0/c_1$ for some $c_1>1$,
\begin{equation}\label{estGreen2}
\|G_0(\cdot,y)-\overline{\Gamma}(\cdot, y)\|_{H^1(\om_0)}\leq C
\end{equation}
and 
\begin{equation}\label{energyGreen}
\|G_0(\cdot,y)\|_{H^1(\Omega_0\backslash B_{r}(y))}\leq C\left|\ln\frac{D}{r}\right|^{1/2}
\end{equation}
where $D$ depends only on $L$ and $d_0$.\\
Furthermore, let $y_r=P+rn(P)$, where $P$ is a point on $\der\mathcal{P}^0$ such that $dist(P,\{P_i^0\}_{j=1}^M)\geq d_0/c_1$ for some $c_1>1$ and $n(P)$ is the unit outer normal to $\der\mathcal{P}^0$. 
Then for $r$ small enough and for $x\in\mathcal{P}^0\cap B(P,d_0/2c_1)$ , we have
\begin{equation}\label{estGreen3}
	\left |\nabla G_0(x,y_r)-\nabla\overline{\Gamma}(x,y_r)\right|\leq C.
\end{equation}
where $$\nabla\overline{\Gamma}(x,y_r)=\frac{2}{\gamma_++k}\nabla\Gamma(x,y_r)$$ if $B(P,d_0/2c_1)$ intersects $\Omega^+$  or $$\nabla\overline{\Gamma}(x,y_r)=\frac{2}{\gamma_-+k}\nabla\Gamma(x,y_r)$$ if $B(P,d_0/2c_1)$ intersects $\Omega^-$ and where $\Gamma(x,y)$ is the fundamental solution for the Laplacian operator.

\end{prop}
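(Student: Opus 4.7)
The plan is to exploit the fact that in a ball $B_r(y)$ centered at the pole $y$ the bi-phase model conductivity $\gamma_y$ agrees with the actual conductivity $\gamma_{\mathcal{P}^0}$, so that the difference $w(\cdot) = G_0(\cdot,y)-\overline{\Gamma}(\cdot,y)$ is harmonic with respect to $\dive(\gamma_{\mathcal{P}^0}\nabla \cdot)$ near the singular point, and the singularities of $G_0$ and $\overline{\Gamma}$ cancel. For (\ref{estGreen2}) I will observe that $\dive(\gamma_{\mathcal{P}^0}\nabla w)=\dive((\gamma_y-\gamma_{\mathcal{P}^0})\nabla\overline{\Gamma})$ in $\Omega_0$, where the right-hand side is supported away from $y$ (outside $B_r(y)$, where $\gamma_y$ and $\gamma_{\mathcal{P}^0}$ may differ), hence $\nabla\overline{\Gamma}$ is bounded on the support of $(\gamma_y-\gamma_{\mathcal{P}^0})$ by a constant depending only on $d_0$ and $c_1$. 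The boundary data $w=-\overline{\Gamma}(\cdot,y)$ on $\der\Omega_0$ are likewise bounded in $H^{1/2}$ since $y$ is at distance at least $d_0/c_1$ from $\der\Omega_0$. A standard energy estimate for the resulting boundary value problem, using ellipticity of $\gamma_{\mathcal{P}^0}$ and the bounds \eqref{contrasto}, then yields \eqref{estGreen2}.

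For the logarithmic bound (\ref{energyGreen}), I will argue by the classical Caccioppoli-type trick. Pick a radial cutoff $\eta$ equal to $0$ on $B_{r/2}(y)$ and to $1$ outside $B_r(y)$, multiply the Green's function equation by $\eta^2 G_0$ and integrate. One obtains $\int \gamma_{\mathcal{P}^0}|\nabla G_0|^2\eta^2 \leq C\int \gamma_{\mathcal{P}^0}|\nabla\eta|^2 G_0^2$, and the two-dimensional logarithmic blow-up $G_0(\cdot,y)\sim c\log|\cdot-y|$ (which follows from (\ref{estGreen2}) and the explicit form of $\overline{\Gamma}$) controls the right-hand side by $C|\log(D/r)|$. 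Taking square roots delivers \eqref{energyGreen}. A companion argument on annuli $B_{2r}(y)\setminus B_r(y)$ gives the logarithmic growth of the $L^2$ norm of $G_0$ which feeds this estimate.

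The delicate point is the pointwise gradient estimate (\ref{estGreen3}). Under the assumptions, $P\in\der\mathcal{P}^0$ is at distance $\geq d_0/c_1$ from every vertex and from every interface $\Sigma_i$, so in the ball $B(P,d_0/(2c_1))$ the boundary $\der\mathcal{P}^0$ is an exact straight segment and $\gamma_{\mathcal{P}^0}$ is purely a two-phase conductivity ($\gamma_\pm$ outside, $k$ inside) with flat interface. In this ball $\gamma_{\mathcal{P}^0}$ coincides with $\gamma_{y_r}$, so $w=G_0(\cdot,y_r)-\overline{\Gamma}(\cdot,y_r)$ solves the homogeneous transmission equation \eqref{sal} (after a suitable rigid motion aligning the interface with $\{x_2=0\}$) in $B(P,d_0/(2c_1))$. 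Then Proposition \ref{LiNir} applied to $w$ on a slightly shrunken ball gives
\[
\|\nabla w\|_{L^\infty(B(P,d_0/(4c_1)))}\leq C\|w\|_{L^2(B(P,d_0/(2c_1)))},
\]
and the right-hand side is bounded uniformly in $r$ thanks to (\ref{estGreen2}). The choice of the prefactor $2/(\gamma_\pm+k)$ in $\overline{\Gamma}$ is exactly the one enforcing continuity of the co-normal flux across the flat interface, and hence it is the one for which the singular part of $G_0(\cdot,y_r)$ is matched. The main obstacle is to keep all estimates uniform as $r\downarrow 0$ even though $y_r\to P\in\der\mathcal{P}^0$: this is handled by localizing \emph{inside} $\mathcal{P}^0$ (away from $y_r$ which lies in the exterior) and by noting that, by the transmission structure, the bi-phase fundamental solution already encodes the only singular contribution seen from inside, so the remainder $w$ admits a uniform interior estimate via Proposition \ref{LiNir}.
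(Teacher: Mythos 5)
Your argument for \eqref{estGreen2} (rewriting $w=G_0-\overline{\Gamma}$ as a solution of $\dive(\gamma_{\mathcal{P}^0}\nabla w)=\dive((\gamma_y-\gamma_{\mathcal{P}^0})\nabla\overline{\Gamma})$ with a right-hand side supported away from $y$ and applying the energy estimate) is sound and in the spirit of Proposition 3.4 of \cite{BF11}, which is what the paper invokes. Your argument for \eqref{estGreen3} is also essentially correct and pleasingly explicit: since $\gamma_{\mathcal{P}^0}=\gamma_{y_r}$ in $B(P,d_0/c_1)$, the difference $w$ is a genuine $H^1$ solution of a flat-interface transmission problem there, and Proposition~\ref{LiNir} together with the uniform bound from \eqref{estGreen2} gives the pointwise gradient estimate; the paper simply refers to \cite{BF11} for this step, so your route is at least a cleaner way of organizing the same idea around Li--Nirenberg.

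There is, however, a genuine gap in your proof of \eqref{energyGreen}. With the cutoff $\eta$ vanishing on $B_{r/2}(y)$ and equal to $1$ outside $B_r(y)$, the Caccioppoli inequality gives
\[
\int_{\Omega_0}\gamma_{\mathcal{P}^0}|\nabla G_0|^2\eta^2\leq C\int_{B_r(y)\setminus B_{r/2}(y)}|\nabla\eta|^2G_0^2,
\]
and on that annulus $|\nabla\eta|\sim r^{-1}$, the area is $\sim r^2$, and $|G_0|\sim|\ln r|$, so the right-hand side is of order $|\ln r|^2$, not $|\ln(D/r)|$ as you claim. Your argument therefore only yields $\|G_0(\cdot,y)\|_{H^1(\Omega_0\setminus B_r(y))}\leq C|\ln(D/r)|$, which misses the exponent $1/2$ in the statement. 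Attempting to repair this by subtracting the constant $c\ln r$ before applying Caccioppoli fails because $G_0(\cdot,y)$ vanishes on $\partial\Omega_0$, so $G_0-c\ln r$ does not, and the boundary layer reintroduces the extra logarithm. The paper avoids the issue entirely: it deduces \eqref{energyGreen} directly from \eqref{estGreen2} by the triangle inequality, $\|G_0\|_{H^1(\Omega_0\setminus B_r(y))}\leq\|G_0-\overline{\Gamma}\|_{H^1(\Omega_0)}+\|\overline{\Gamma}\|_{H^1((B_D(y)\setminus B_r(y))\cap\Omega_0)}$, and then bounds the second term by an explicit computation using $|\overline{\Gamma}(x,y)|\leq C|\ln|x-y||$ and $|\nabla\overline{\Gamma}(x,y)|\leq C|x-y|^{-1}$, which gives exactly $C|\ln(D/r)|^{1/2}$. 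You should replace the Caccioppoli step with this direct decomposition (note that, since in the downstream applications $r$ stays bounded away from $0$, your weaker bound would not break the rest of the paper, but it does not prove the stated estimate).
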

\begin{proof}
The proof of (\ref{estGreen2}) can be derived with similar arguments as in Proposition 3.4 in \cite{BF11}.  Observe that from (\ref{estGreen2}) we have
\begin{equation}\label{estimate1}
   \|G_0(\cdot,y)\|_{H^1(\Omega_0\backslash B_{r}(y))}\leq
 \|G_0(\cdot,y)-\overline{\Gamma}(\cdot, y)\|_{H^1(\om_0)}+\|\overline{\Gamma}(\cdot,y)\|_{H^1(\Omega_0\backslash B_{r}(y))}
 \leq C+\|\overline{\Gamma}(\cdot,y)\|_{H^1(\Omega_0\backslash B_{r}(y))}.
 \end{equation}
 Now, 
\begin{equation}\label{estimate2}
\|\overline{\Gamma}(\cdot,y)\|_{H^1(\Omega_0\backslash B_{r}(y))}\leq 
\|\overline{\Gamma}(\cdot,y)\|_{H^1((B_{D}(y)\backslash B_{r}(y)))\cap\Omega_0}
\end{equation}
where $D=\sqrt{4L^2+(2L+2d_0)^2}$. Now, using the explicit representation for the bi-phase fundamental solution (see for example (4.26) in \cite{AV}) we have that 
\[
|\overline{\Gamma}(x,y)|\leq C|\ln|x-y||,\,\, |\nabla\overline{\Gamma}(x,y)|\leq C|x-y|^{-1}
\]
where $C$ depends on $\gamma^+$ and $\gamma^-$ and an easy computation shows that 
\begin{equation}\label{Fundamental}
\|\overline{\Gamma}(\cdot,y)\|_{H^1((B_{D}(y)\backslash B_{r}(y)))\cap\Omega_0}\leq C\left|\ln\frac{D}{r}\right|^{1/2}.
\end{equation}
Hence, from last inequality, (\ref{estimate2}) and (\ref{estimate1})
(\ref{energyGreen}) follows.
Finally, (\ref{estGreen3}) follows from similar arguments as in Proposition 3.4 in \cite{BF11}.  
\end{proof}
\begin{prop}\label{p3.3}
There exist a constant $m_0>0$, depending only on the a priori data, and a pair of functions $f_0$ and $g_0$ in $H^{1/2}(\der\om)$ such that
\begin{equation}\label{tesibasso}\left|{F^\prime(0,f_0,g_0)}\right|\geq m_0 d_H\|f_0\|_{H^{1/2}(\der\om)}\|g_0\|_{H^{1/2}(\der\om)}. \end{equation}
\end{prop}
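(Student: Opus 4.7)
The plan is to use the distributed representation
\[F'(0,f,g) = -\int_\Omega \gamma_{\mathcal{P}^0}\,\mathcal{A}\,\nabla u_0\cdot\nabla v_0\,dx\]
with test data $(f_0,g_0)$ chosen so that the associated solutions $u_0,v_0$ concentrate sharply near a single point of $\partial\mathcal{P}^0$ where $\mathcal{U}$ is of size comparable to $d_H$. Concretely, $u_0,v_0$ should be quantitative Runge approximations of the Green function $G_0(\cdot,y_r)$ of $\dive(\gamma_{\mathcal{P}^0}\nabla\cdot)$ on $\Omega_0$, with $y_r$ close to $\partial\mathcal{P}^0$. Because $\mathcal{A}$ is supported in the tubular neighborhood $W$ of $\partial\mathcal{P}^0$, only the behaviour of $\nabla u_0\cdot\nabla v_0$ there matters, and the singular profile of $G_0$ makes the integrand blow up in a controlled manner: one obtains a main term of order $d_H\,|\log r|$, while the $H^{1/2}$-norms of the boundary data grow only like $|\log r|^{1/2}$, so the ratio gives the desired $d_H$.

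By Proposition \ref{prop3.3} we can relabel so that $|P_{j_0}^0-P_{j_0}^1|\geq d_H/C_0$ for some index $j_0$. Since $\mathcal{U}$ is piecewise affine on the sides of $\mathcal{P}^0$, a direct geometric argument using the angle bound \eqref{angoli} produces a point $P^*$ on one of the two sides incident to $P_{j_0}^0$ with $|\mathcal{U}(P^*)|\geq c_1 d_H$ and $|\mathcal{U}(P^*)\cdot n(P^*)|\geq c_1 d_H$, where $n(P^*)$ is the outer unit normal to $\partial\mathcal{P}^0$; moreover $P^*$ can be chosen with $\mathrm{dist}(P^*,\{P_j^0\}_{j=1}^M\cup\Sigma\cup\partial\Omega)\geq d_0/c_2$. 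Set $y_r=P^*+r\,n(P^*)$ with $r>0$ small (to be tuned), so that $y_r\in\Omega\setminus\mathcal{P}^0$ lies at distance $\geq d_0/(2c_2)$ from all vertices, all interfaces, and $\partial\Omega$, and Proposition \ref{Green} applies at $y_r$.

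Formally setting $u_0=v_0=G_0(\cdot,y_r)$, estimate \eqref{estGreen3} gives $\nabla G_0(x,y_r)=\frac{2}{\gamma_b+k}\nabla\Gamma(x,y_r)+O(1)$ for $x\in B(P^*,d_0/(2c_2))\cap\mathcal{P}^0$, so $|\nabla G_0|^2\sim c|x-y_r|^{-2}$. Substituting into the distributed formula, using the algebraic structure $\mathcal{A}=\dive(\mathcal{U})I-(D\mathcal{U}+D\mathcal{U}^T)$ on the affine portion of $\partial\mathcal{P}^0$ near $P^*$, and flattening the relevant side by an affine change of coordinates, the principal contribution reduces to an explicit singular integral proportional to $(\mathcal{U}(P^*)\cdot n(P^*))\,\log(1/r)$, hence of size $c\,d_H\,\log(1/r)$ with a definite sign (the sign is controlled by choosing $y_r$ on the exterior of $\mathcal{P}^0$); remote contributions in $W$ are only $O(d_H)$ since $|\mathcal{A}|\leq Cd_H$ and $|\nabla G_0|$ is bounded away from $y_r$. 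Estimate \eqref{energyGreen} gives $\|G_0(\cdot,y_r)\|_{H^1(\Omega\setminus B_r(y_r))}\leq C|\log r|^{1/2}$, so the trace has $H^{1/2}(\partial\Omega)$-norm bounded by $C|\log r|^{1/2}$. To make this rigorous, since $G_0(\cdot,y_r)$ is singular at $y_r\in\Omega$ and is not itself a solution of the homogeneous Dirichlet problem, we replace it by a genuine solution $u_0\in H^1(\Omega)$ with boundary data $f_0$ satisfying $\|u_0-G_0(\cdot,y_r)\|_{H^1(\Omega\setminus B_\rho(y_r))}\leq\eta$ and $\|f_0\|_{H^{1/2}(\partial\Omega)}\leq C|\log r|^{1/2}$ for auxiliary $\rho\ll r$ and small $\eta$; the existence of such a $u_0$ follows from a quantitative Runge density statement based on the three-spheres inequality of \cite{CW} in layered media. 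Setting $g_0=f_0$, tuning $r$ and $\eta$ so that error terms are dominated by the principal part, and dividing by $\|f_0\|_{H^{1/2}}\|g_0\|_{H^{1/2}}\leq C|\log r|$, we conclude $|F'(0,f_0,g_0)|\geq m_0\,d_H\,\|f_0\|_{H^{1/2}}\|g_0\|_{H^{1/2}}$.

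The main obstacle is the quantitative Runge step: propagating smallness from $\partial\Omega$ to a small ball around $y_r$ while crossing the interfaces $\Sigma_i$ and possibly the sides of $\mathcal{P}^0$, with an explicit trade-off between approximation error $\eta$, distance $r$ to the singularity, and the norm of the approximating boundary data. The three-spheres inequality of \cite{CW} supplies the needed stability constants, and the separation $\geq d_0/(2c_2)$ from singular points of $\gamma_{\mathcal{P}^0}$ secured in the previous step keeps the length of the iteration chain and all geometric constants under control by the a priori data. A secondary point is verifying that the algebraic reduction of $\int_W \gamma_{\mathcal{P}^0}\mathcal{A}\nabla G_0\cdot\nabla G_0$ to the principal singular integral retains a definite sign; this is handled by exploiting the freedom in choosing the orientation of $y_r$ with respect to $\partial\mathcal{P}^0$ so that $\mathcal{U}(P^*)\cdot n(P^*)$ contributes with the sign that makes the leading term nonzero.
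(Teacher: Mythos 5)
The paper proves this proposition by an indirect, contradiction-type argument that you do not follow. Your proposal attempts a direct construction of $f_0,g_0$ concentrating near a point $P^*$ of large normal displacement, and this runs into a concrete obstruction at its pivotal step. The paper instead introduces the normalized bilinear form $H$ with operator norm $m_1$, defines $S_0(y,z)=-\int_\Omega\gamma_{\mathcal{P}^0}\tilde{\mathcal{A}}\nabla G_0(\cdot,y)\cdot\nabla G_0(\cdot,z)$ for poles $y,z$ placed in the \emph{exterior} region $D_0\subset\Omega_0\setminus\overline{\Omega}$ (so that $G_0(\cdot,y)$ is a bona fide solution in $\Omega$ and $|S_0(y,z)|\le Cm_1|\log(4D/d_0)|$), integrates by parts to a boundary representation \eqref{b62}, observes that $S_0$ solves $\dive(\gamma_{\mathcal{P}^0}\nabla S_0)=0$ in each variable, propagates the smallness of $|S_0|$ from $D_0$ toward $\partial\mathcal{P}^0$ using the three-spheres estimate of \cite{CW} together with the polynomial bound \eqref{limit}, compares with the lower bound \eqref{5-1s} for $|S_0(y_r,y_r)|$, and concludes $|\tilde{\mathcal{U}}(P)\cdot n(P)|\le\omega_0(m_1)$ at every vertex; since some vertex has $|\tilde{\mathcal{U}}\cdot n|\ge 1/(CM)$, this forces $m_1\ge\omega_0^{-1}(1/(CM))$, and $f_0,g_0$ are then obtained abstractly from the definition of the operator norm.

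The decisive gap in your proposal is the ``quantitative Runge'' step: you require a genuine solution $u_0\in H^1(\Omega)$ with $\|u_0-G_0(\cdot,y_r)\|_{H^1(\Omega\setminus B_\rho(y_r))}\le\eta$ for $\rho\ll r$ and $\eta$ small. This is impossible, independently of any stability constants. Indeed, with $v=u_0-G_0(\cdot,y_r)$ one has $\dive(\gamma_{\mathcal{P}^0}\nabla v)=-\delta_{y_r}$, so the conormal flux of $v$ through every circle $\partial B_{\rho'}(y_r)$ with $\rho\le\rho'\le 2\rho$ is identically $-1$; Cauchy--Schwarz then gives $\|\nabla v\|_{L^2(\partial B_{\rho'})}\gtrsim(\rho')^{-1/2}$, and integrating in $\rho'$ yields $\|\nabla v\|_{L^2(B_{2\rho}\setminus B_\rho)}\gtrsim 1$ uniformly in $\rho$. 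Hence the $H^1$ approximation error on $\Omega\setminus B_\rho(y_r)$ is bounded below by a fixed positive constant for \emph{every} admissible $u_0$ and \emph{every} $\rho$, so the trade-off between $\eta$, $\rho$ and $\|f_0\|_{H^{1/2}}$ you invoke does not exist. (The classical Runge density theorem also does not apply here because $B_\rho(y_r)$ is a compact component of $\Omega\setminus D$.) This obstruction is exactly what the paper circumvents by keeping the poles of $G_0$ in $\Omega_0\setminus\overline\Omega$ and moving the smallness inward via unique-continuation estimates for the auxiliary function $S_0$, rather than via solution approximation.

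Two secondary points. First, the sign and size of the principal term are controlled in the paper only after passing to the boundary integral representation: the leading contribution is $\sim(\tilde{\mathcal U}\cdot n)/r$ from the line integral $\int(r^2+s^2)^{-1}\,ds$ near $P$, not $d_H|\log r|$; the apparent $|\log\rho|$ divergence of the volume integral near the pole cancels, because $\mathcal{A}=\dive(\mathcal U)I-(D\mathcal U+D\mathcal U^T)$ is traceless so that $\int_{\text{annulus}}\mathcal{A}\nabla\Gamma\cdot\nabla\Gamma=0$ for the radially symmetric singular part. Second, the choice of $P^*$ with $|\mathcal U(P^*)\cdot n(P^*)|\gtrsim d_H$ at a fixed positive distance from vertices and interfaces requires more care than stated: since $\mathcal U\cdot n_j$ is affine on the side and can change sign, you must pick $P^*$ close enough to the vertex $P^0_{j_0}$ for the interpolated value to remain comparable to $|V_{j_0}\cdot n_j|$, but far enough for Proposition \ref{Green} to apply, and you must invoke the angle condition \eqref{angoli} to get $\max_j|V_{j_0}\cdot n_j|\gtrsim d_H$ over the two incident sides. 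These are fixable, but the Runge obstruction above is not, and it is the point at which the argument would need to be replaced by something closer to the paper's propagation-of-smallness scheme.
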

\begin{proof}
Let us set 
\[V=\left(P^1_1-P^0_1,P^1_2-P^0_2,\ldots,P^1_M-P^0_M\right).\]
We recall that
\[C^{-1}d_H\leq |V|\leq d_H\]
for $C$ depending only on the a priori data.

Let us first normalize the length of vector $|V|$ by setting
\[\tilde{\mathcal{U}}=\frac{\mathcal{U}}{|V|}, \quad \tilde{\mathcal{A}}=\frac{\mathcal{A}}{|V|}\]
and
\[H(f,g)=-\int_\om\gamma_{\mathcal{P}^0}\tilde{\mathcal{A}}\nabla u_0\cdot\nabla v_0,\]
so that 
\[F^\prime(0,f,g)=|V|\, H(f,g).\]
Let 
\[m_1=\|H\|_*=\sup\left\{\frac{|H(f,g)|}{\|f\|_{H^{1/2}(\der\om)}\|g\|_{H^{1/2}(\der\om)}}\,:\,f,g\neq 0\right\}\] be the operator norm of $H$, so that
\begin{equation}\label{normaH}\left|H(f,g)\right|\leq m_1\|f\|_{H^{1/2}(\der\om)}\|g\|_{H^{1/2}(\der\om)}\mbox{ for every }f,g\in H^{1/2}(\der\om).
\end{equation}



Let us now set, for $y,z\in\Omega_0\setminus\overline{\om}$,
\begin{equation}\label{gr}
u_0(x)=G_0(x,y)\quad\mbox{ and }\quad v_0(x)=G_0(x,z)
\end{equation}
and define 
\[S_0(y,z)=-\int_\Omega\gamma_{\mathcal{P}^0}\tilde{\mathcal{A}}\nabla u_0\cdot\nabla v_0.\]
Notice that, for $y,z\in D_0=[-\frac{d_0}{2},\frac{d_0}{2}]\times [L+\frac{d_0}{2},L+\frac{3d_0}{2}]$
\[S_0(y,z)=H({G_0(\cdot,y)}_{|_{\der\Omega}},{G_0(\cdot,y)}_{|_{\der\Omega}})\]
hence, by \eqref{normaH}, by  (\ref{energyGreen}) of  Proposition \ref{Green} with $r=d_0/4$, and, finally, by the Trace Theorem we have 
\begin{equation}\label{12.1}
\left|S_0(y,z)\right|\leq C m_1\left|\ln\frac{4D}{d_0}\right|\mbox{ for }y,z\in D_0,
\end{equation}
where $C$ depends on the a-priori data.

By assumptions \eqref{lati} and \eqref{distanzainterfaccia}, there exists a constant $C_0$ depending only on the a-priori data, such that
\[dist(P^0_j,P^0_l)>\frac{2d_0}{C_0}\mbox{ if }j\neq l\]
and 
\[B\left(P^0_j,\frac{d_0}{C_0}\right)\mbox{ does not intersect sides of }\mathcal{P}^0\mbox{ that do not contain }P^0_j\]
Let $\mathcal{B}=\cup_{j=1}^{M}B\left(P^0_j,\frac{d_0}{4C_0}\right)$ and let us write
\begin{equation}\label{e1}
S_0(y,z)=-\int_{\Omega\setminus\mathcal{B}}\gamma_{\mathcal{P}^0}\tilde{\mathcal{A}}\nabla u_0\cdot\nabla v_0-\int_\mathcal{B}\gamma_{\mathcal{P}^0}\tilde{\mathcal{A}}\nabla u_0\cdot\nabla v_0.
\end{equation}
Since $\gamma_{\mathcal{P}^0}$ is piece-wise constant, the solutions $u_0$ and $v_0$ are harmonic in each domain where $\gamma_{\mathcal{P}^0}$ is constant, that is in $\Omega_+\setminus{\mathcal{P}^0}$, $\Omega_-\setminus{\mathcal{P}^0}$ and $\mathcal{P}^0$.
In each of this  sets
\[\tilde{\mathcal{A}}\nabla u_0\cdot\nabla v_0=-\dive(b)\]
where 
\[b=\left(\tilde{\mathcal{U}}\cdot\nabla u_0\right)\nabla v_0+\left(\tilde{\mathcal{U}}\cdot\nabla v_0\right)\nabla u_0-\left(\nabla v_0\cdot\nabla u_0\right)\tilde{\mathcal{U}},\]
(see \cite{BMPS} for details).
We can write
\begin{equation}\label{e2.0}
\int_{\Omega\setminus\mathcal{B}}\!\!\!\!\gamma_{\mathcal{P}^0}\tilde{\mathcal{A}}\nabla u_0\cdot\nabla v_0= -\gamma_+\int_{\Omega_+\setminus(\mathcal{P}^0\cup\mathcal{B})}\!\!\!\!\!\!\!\!\!\!\!\!\dive(b)
- \gamma_-\int_{\Omega_-\setminus(\mathcal{P}^0\cup\mathcal{B})}\!\!\!\!\!\!\!\!\!\!\!\!\dive(b)- k\int_{\mathcal{P}^0\setminus\mathcal{B}}\!\!\!\!\!\!\!\!\!\!\dive(b).
\end{equation}

For any function $u$ defined in $\om$, let us use the following notation:
\[u^+=u_{|_{\overline{\Omega}_+\setminus\mathcal{P}_0}},\quad u^-=u_{|_{\overline{\Omega}_-\setminus\mathcal{P}_0}}, \quad u^i=u_{|_{\overline{\mathcal{P}}_0}}.
\]
Let us now denote by $n$ the unit outer normal to $\der\om$ and to $\der\mathcal{P}^0$, by $r$ the unit outer normal to the balls in $\mathcal{B}$ and by $e_2$ the unit vertical normal vector.

Integrating by parts and recalling that $supp(\tilde{\mathcal{U}})\subset W$, hence $b=0$ on $\der\om$, we have
\begin{equation}\label{e2.1}
    \int_{\Omega_+\setminus(\mathcal{P}^0\cup\mathcal{B})}\!\!\!\!\!\!\!\!\!\!\!\!\dive(b^+) =
    -\int_{(\der\mathcal{B}\cap\Omega_+)\setminus\mathcal{P}^0}\!\!\!\!\!\!\!\!\!\!\!\! b^+\cdot r - \int_{(\der\mathcal{P}^0\cap\Omega_+)\setminus\mathcal{B}}\!\!\!\!\!\!\!\!\!\!\!\! b^+\cdot n-\int_{\Sigma\setminus\mathcal{P}^0}\!\!\!\!\!\!\!\! b^+\cdot e_2,
\end{equation}
\begin{equation}\label{e2.2}
    \int_{\Omega_-\setminus(\mathcal{P}^0\cup\mathcal{B})}\!\!\!\!\!\!\!\!\!\!\!\!\dive(b^-) =
    -\int_{(\der\mathcal{B}\cap\Omega_-)\setminus\mathcal{P}^0}\!\!\!\!\!\!\!\!\!\!\!\! b^-\cdot r - \int_{(\der\mathcal{P}^0\cap\Omega_-)\setminus\mathcal{B}}\!\!\!\!\!\!\!\!\!\!\!\! b^-\cdot n+\int_{\Sigma\setminus\mathcal{P}^0}\!\!\!\!\!\!\!\! b^-\cdot e_2
\end{equation}
and
\begin{equation}\label{e2.3}
    \int_{\mathcal{P}^0\setminus\mathcal{B}}\!\!\!\!\!\dive(b^i) =
    \int_{\der\mathcal{P}^0\setminus\mathcal{B}}\!\!\!\!\!\!\!\!\! b^i\cdot n-\int_{\der\mathcal{B}\cap\mathcal{P}^0}\!\!\!\!\!\!\!\!\!\! b^i\cdot r.
\end{equation}
By \eqref{e2.0}, \eqref{e2.1}, \eqref{e2.2} and \eqref{e2.3} we get
\begin{equation}\label{e2}
\int_{\Omega\setminus\mathcal{B}}\!\!\!\!\gamma_{\mathcal{P}^0}\tilde{\mathcal{A}}\nabla u_0\cdot\nabla v_0=\int_{\der\mathcal{B}}\gamma_{\mathcal{P}^0}b\cdot r+\int_{\Sigma\setminus\mathcal{P}^0}[ \gamma_{\mathcal{P}^0}b\cdot e_2]-
\int_{\der\mathcal{P}^0\setminus\mathcal{B}}[\gamma_{\mathcal{P}^0}b\cdot n]
\end{equation}
where $[\quad]$ denotes the jump.

By definition,  we have $\tilde{\mathcal{U}}\cdot e_2=0$ on $\Sigma$, and, by transmission conditions for $u_0$ and $v_0$ on $\Sigma$, we have
$\gamma_+\nabla u_0^+\cdot e_2=\gamma_-\nabla u_0^-\cdot e_2$ and $\gamma_+\nabla v_0^+\cdot e_2=\gamma_-\nabla v_0^-\cdot e_2$ while $\tilde{\mathcal{U}}\cdot \nabla u_0^+=\tilde{\mathcal{U}}\cdot \nabla u_0^-$ and  $\tilde{\mathcal{U}}\cdot \nabla v_0^+=\tilde{\mathcal{U}}\cdot \nabla v_0^-$. For these reasons,

\begin{equation}\label{b51}
[ \gamma_{\mathcal{P}^0}b\cdot e_2]=0\mbox{ on }\Sigma.
\end{equation}
In a similar way, by transmission conditions on $\der\mathcal{P}^0$, we can write
\begin{equation}\label{b52}
[\gamma_{\mathcal{P}^0}b\cdot n]= \left(\tilde{\mathcal{U}}\cdot n\right)(k-\gamma_+)\mathcal{M}^+ \nabla u_0^i\cdot \nabla v_0^i \mbox{ on }\der\mathcal{P}^0\setminus\mathcal{B}\cap \om_+
\end{equation}
where  $\mathcal{M}^+$ is a tensor with eigenvectors $n$ and $n^\perp$ with eigenvalues $\frac{k}{\gamma_+}$ and $1$, and
 \begin{equation}\label{b61}
[\gamma_{\mathcal{P}^0}b\cdot n]= \left(\tilde{\mathcal{U}}\cdot n\right)(k-\gamma_-)\mathcal{M}^- \nabla u_0^i\cdot \nabla v_0^i \mbox{ on }\der\mathcal{P}^0\setminus\mathcal{B}\cap \om_-
\end{equation}
where $\mathcal{M}^-$ is a tensor with eigenvectors $n$ and $n^\perp$ with eigenvalues $\frac{k}{\gamma_-}$ and $1$.

Hence, finally, by \eqref{e1}, \eqref{e2}, \eqref{b51}, \eqref{b52} and \eqref{b61},

\begin{equation}\label{b62}
S_0(y,z)=-\int_{\mathcal{B}}\gamma_{\mathcal{P}^0}\tilde{\mathcal{A}}\nabla u_0\nabla v_0-\int_{\der\mathcal{B}}\gamma_{\mathcal{P}^0}b\cdot r
+\int_{\der\mathcal{P}^0\setminus\mathcal{B}}\left(\tilde{\mathcal{U}}\cdot n\right)(k-\gamma_{\mathcal{P}^0})\mathcal{M} \nabla u_0^i\cdot \nabla v_0^i 
\end{equation}
where $\mathcal{M}=\mathcal{M}^+\chi_{\om_+\cap {\der\mathcal{P}^0}}+
\mathcal{M}^-\chi_{\om_-\cap {\der\mathcal{P}^0}}$.
 
From formula \eqref{b62} we deduce that $S_0$ is well defined for $y,z\in \om_0\setminus \overline{\mathcal{P}^0\cup \mathcal{B}}$ and, recalling \eqref{gr}, we have that $S_0$ solves the equation
\begin{equation}\label{diffequS}
\dive\left(\gamma_{\mathcal{P}^0}\nabla S_0\right)=0 \mbox{ in }\om_0\setminus \overline{\mathcal{P}^0\cup \mathcal{B}}
\end{equation}
both with respect to $y$ and to $z$.

Let us now consider any segment $P^0_jP^0_{j+1}$ of $\der\mathcal{P}^0$ and let $P$ be the mid-point of such segment.

By assumptions on \eqref{angoli}-\eqref{distanzainterfaccia} 
the disk $B(P,\frac{d_0}{C_0})$ intersects $\der\mathcal{P}^0$ only on the side containing $P^0_jP^0_{j+1}$ and it does not intersect $\Sigma$.

Let $\sigma$ be a simple curve joining the point $P+\frac{d_0}{C_0}n(P)$ to the point $Q_0=(0,L+d_0)$ such that $\sigma\subset\om$ and $dist(\sigma,\mathcal{P}^0)>\frac{d_0}{C_0}$. Let $\sigma^\prime=\sigma\cup\{P+tn(P)\,:\,t\in [0,\frac{d_0}{C_0}]\}$.


Let $K=\left\{x\in\om_0\setminus\mathcal{P}_0\, :\, dist(x,\sigma^\prime)<\frac{d_0}{4C_0}\right\}$ and $K_0=\left\{x\in\om_0\setminus\mathcal{P}_0\, :\, dist(x,\sigma^\prime)<\frac{d_0}{8C_0}\right\}$


The function $S_0$ solves in $K$ the equation $\dive\left(\gamma_{\mathcal{P}^0}\nabla S_0\right)=0$ with respect to both $y$ and  $z$.
Let us now estimates function $S_0(y,z)$ for points $y,z\in K$.

First of all
by \eqref{energyGreen} we notice that, since  $dist(K,\mathcal{B})
\geq \tilde{r}_0:=\frac{d_0}{4C_0}$, then
\[ \|\nabla u_0\|_{L^2(\mathcal{B})}\leq \|G_0(\cdot,y)\|_{H^1(\om_0\setminus B_{\tilde{r}_0}(y))}\leq C\]
where $C$ depends only on the apriori constants. Since
the same holds for $v_0$, we have
\begin{equation}\label{s1}\left|\int_{\mathcal{B}}\gamma_{\mathcal{P}^0}\tilde{\mathcal{A}}\nabla u_0\nabla v_0\right|\leq C \|\nabla u_0\|_{L^2(\mathcal{B})}\|\nabla v_0\|_{L^2(\mathcal{B})}
\leq C.
\end{equation}

Let us now notice that we can cover $\der\mathcal{B}$ by a finite number of balls of radius $\frac{d_0}{16 C_0}$ such that, in each  ball of radius $\frac{d_0}{8 C_0}$ concentric to them, the function $u_0$ solves an 
equation of the form \eqref{sal} with $h=g=0$ (for some balls $\gamma^1=\gamma^2$).

By combining \eqref{linir} with  \eqref{energyGreen} and the fact that the distance between $K$ and the largest balls covering $\der\mathcal{B}$  is bigger than $\tilde{r}_0$, then
$\|\nabla u_0\|_{L^\infty(\der\mathcal{B})}$ and $\|\nabla u_0\|_{L^\infty{(\der\mathcal{B})}}$ are bounded by a constant depending only on the apriori constants and, hence,
\begin{equation}\label{s2}\left|\int_{\der\mathcal{B}}\gamma_{\mathcal{P}^0}b\cdot r\right|\leq C
\end{equation}
For a similar reason, for point on $\der\mathcal{P}_0\setminus(\mathcal{B}\cup B(P,\frac{d_0}{C_0}))$ we can bound the $L^\infty$ norms of $u_0$ and $v_0$ for point $y$ and $z$ in $K$ and
 \begin{equation}\label{s3}
\left| \int_{\der\mathcal{P}_0\setminus(\mathcal{B}\cup B(P,\frac{d_0}{C_0}))}\left(\tilde{\mathcal{U}}\cdot n\right)(k-\gamma_{\mathcal{P}^0})\mathcal{M} \nabla u_0^i\cdot \nabla v_0^i \right|\leq C.
 \end{equation}
Now, finally, we consider 
\[\int_{\der\mathcal{P}_0\cap B(P,\frac{d_0}{C_0})}\left(\tilde{\mathcal{U}}\cdot n\right)(k-\gamma_{\mathcal{P}^0})\mathcal{M} \nabla u_0^i\cdot \nabla v_0^i\]
and notice that, if $y,z$ are at positive fixed distance from  $\der\mathcal{P}_0\cap B(P,\frac{d_0}{C_0})$, then we can again use \eqref{energyGreen} and \eqref{linir} and bound the $L^\infty$ norm of $\nabla u_0^i$ and $\nabla v_0^i$.
On the other hand, for points $y$ and $z$ close to $\der\mathcal{P}_0\cap B(P,\frac{d_0}{C_0})$ we can use \eqref{estGreen3} and the explicit formula for $\Gamma$ to finally get that

\begin{equation}
\label{s4}
\left|
\int_{\der\mathcal{P}_0\cap B(P,\frac{d_0}{C_0})}\left(\tilde{\mathcal{U}}\cdot n\right)(k-\gamma_{\mathcal{P}^0})\mathcal{M} \nabla u_0^i\cdot \nabla v_0^i
\right|\leq C\left(d_y d_z\right)^{-1/2}
\end{equation}
where $d_y=dist(y,\mathcal{P}^0)$ and $d_z=dist(z,\mathcal{P}^0)$ .

By putting together  \eqref{b62} and \eqref{s1}-\eqref{s4} we finally get, for every $y,z\in K$
\begin{equation}
\label{limit}
\left|S_0(y,z)
\right|\leq C\left(d_y d_z\right)^{-1/2}
\end{equation}

Let us now recall that  $S_0$ solves \eqref{diffequS} with respect to both variables, that is bounded by \eqref{12.1} if $y,z\in  D_0$ and is bounded by \eqref{limit} for $y,z\in K$.

Let us fix $z$ in $D_0$ and consider $S_0$ as a function of $y$ only. 

We take $Q_0=(0,L+d_0)$  and $\tilde{r}=\frac{d_0}{2}$, so that, by \eqref{12.1}
\begin{equation}\label{t1}\|S_0(\cdot,z)\|_{L^2(B_{\tilde{r}}(Q_0))}\leq C m_1.\end{equation}

Let us set $\tilde{K}=\{x\in K\,: d(x,\mathcal{P}^0)\geq \frac{d_0}{4C_0}\}$ and $U=\{x\in K_0\,: d(x,\mathcal{P}^0)\geq \frac{d_0}{2C_0}\}$.

By \eqref{limit} we have that there is a constant $C$ depending only on the a priori data, such that
\begin{equation}\label{t2}
\|S_0(\cdot,z)\|_{L^2(\tilde{K})}\leq C.
\end{equation}

We now apply Theorem 3.1 in \cite{CW} that gives an estimate on smallness propagation for solutions of elliptic differential equation with jumps. Notice that, in the present case, the right hand side of the equation is null and we take $\Omega=\tilde{K}$. There exists positive constants $\delta$ and $C$ depending only on the a priori data, such that, by \eqref{t1} and \eqref{t2} we have
\begin{equation}\label{t3}
\|S_0(\cdot,z)\|_{L^2(U)}\leq C m_1^\delta.
\end{equation}
In particular, since the disk centered at $\overline{Q}=P+\frac{5d_0}{8C_0}$ of radius $R_1=\frac{d_0}{8C_0}$ is contained in $U$, we have
\begin{equation}\label{t3bis}
\|S_0(\cdot,z)\|_{L^2\left(B_{R_1}(\overline{Q})\right)}\leq C m_1^\delta \quad \forall z\in D_0.
\end{equation}
Now let us notice that, for fixed $z$, $S_0$ is a harmonic function in 
$\{x\in K\,: dist(x,\mathcal{P}^0)\leq \frac{d_0}{C_0}\}$ hence, since \eqref{t3bis} holds, by the mean value property and H\"older inequality
\begin{equation}\label{t4}\left|S_0(y,z)\right| \leq  C m_1^\delta\end{equation}
for $y\in B_{R_1/2}(\overline{Q})$ and $z\in D_0$.

Let us now fix $y\in B_{R_1/2}(\overline{Q})$ and consider the function $S_0(y,\cdot)$. By the same procedure as before we have that
\begin{equation}\label{2-2s}
\|S_0(y,\cdot)\|_{L^2\left(B_{R_1}(\overline{Q})\right)}\leq C m_1^{\delta^2} \quad \forall y\in B_{R_1/2}(\overline{Q}).
\end{equation}

We now apply a classical three sphere inequality for harmonic functions (see, for example, \cite[Appendix E.2]{ADB}) to the function $S_0(\cdot,z)$ in the spheres
\[B_{\overline{R}_1}(\overline{Q})\subset B_{\overline{R}_2}(\overline{Q}) \subset B_{\overline{R}_3}(\overline{Q})\]
for 
\[\overline{R}_1=R_1/2,\quad  \overline{R}_2=\frac{d_0}{2C_0}-\frac{r}{2}\mbox{ and }
\overline{R}_3=\frac{d_0}{2C_0}-\frac{r}{4}.\]
We have that, for every $z\in B_{\overline{R}_1}(\overline{Q})$ and for 
\begin{equation}\label{expon}
\theta_r=\frac{\ln(\overline{R}_3/\overline{R}_3)}{\ln(\overline{R}_3/\overline{R}_1)}
\end{equation}
then
\begin{equation}\label{2-3s}
\|S_0(\cdot,z)\|_{L^2\left(B_{\overline{R}_2}(\overline{Q})\right)}\leq \|S_0(\cdot,z)\|^{\theta_r}_{L^2\left(B_{\overline{R}_1}(\overline{Q})\right)}\|S_0(\cdot,z)\|^{1-\theta_r}_{L^2\left(B_{\overline{R}_3}(\overline{Q})\right)},
\end{equation}
hence, by \eqref{limit} and \eqref{2-2s} we have
\begin{equation}\label{3-3s}
\|S_0(\cdot,z)\|_{L^2\left(B_{\overline{R}_2}(\overline{Q})\right)}\leq C\left(\frac{1}{r}\right)^{\frac{1-\theta_r}{2}}m_2^{\theta_r} \quad \forall z\in B_{\overline{R}_1(\overline{Q})}
\end{equation}
where (see \eqref{2-2s})
\[m_2=Cm_1^{\delta^2}.\]
Recalling that $S_0(\cdot,z)$ is a harmonic function, by the mean value properties and H\"older inequality, we have
\begin{equation}\label{4-3s}
|S_0(y_r,z)|\leq C\left(\frac{1}{r}\right)^{1+\frac{1-\theta_r}{2}}m_2^{\theta_r} \quad \forall z\in B_{\overline{R}_1(\overline{Q})}.
\end{equation}
We now consider $S_0(y_r,\cdot)$ in the same disks as before and we finally get
\begin{equation}\label{4-4s}
|S_0(y_r,y_r)|\leq C\left(\frac{1}{r}\right)^{1+\theta_r + (1-\theta_r)(\frac{\theta_r}{2}+1)}m_2^{\theta_r^2} \end{equation}
From (\ref{expon}) it is straightforward to see that 
\[
\left(\frac{1}{r}\right)^{\theta_r}=1+o(1),\,\text{ as }r\rightarrow 0
\]
which implies the following 
\begin{equation}\label{4-5s}
|S_0(y_r,y_r)|\leq C\frac{1}{r^2}m_2^{\theta_r^2}. 
\end{equation}

By using estimate (\ref{estGreen3}) and proceeding similarly as in \cite[(3.21)]{BF}  we have for $r\leq \frac{d_0}{8C_0}$
\begin{equation}\label{5-1s}
	\left|S_0(y_r,y_r)\right|\geq  \frac{C\tilde{\mathcal{U}}(P)\cdot n(P)}{r}-C\left|\ln r)\right|\end{equation}
and, by comparing  \eqref{4-5s} with \eqref{5-1s}  we get
\begin{equation}\label{5-2s}
\left|\mathcal{U}(P)\cdot n(P)\right|\leq C\left(r\left|\ln r\right|+\frac{1}{r}m_2^{\theta_r^2}\right).
\end{equation}
If $m_2\leq \exp (-(48)^4)$ i.e. $m_1\leq \left(\frac{ \exp( -(48)^4) }{C}\right)^{1/\delta^2}$ then we can pick up $$r=96 \overline{R}_1|\ln (m_2)|^{-1/4}=\frac{6d_0}{C_0}|\ln (m_2)|^{-1/4}$$ in \eqref{5-2s} and after some straightforward estimation  we end up with the following bound  
\[
\left |\mathcal{U}(P)\cdot n(P)\right |\leq C|\ln (m_2)|^{-1/5}
\]
which, recalling the definition of $m_2$, can be written as
\begin{equation}\label{omegazero}|\tilde{\mathcal{U}}(P)\cdot n(P)|\leq  \omega_0(m_1),\end{equation}
where $\omega_0(t)$ is an increasing concave function such that $\lim_{t\to 0^+}\omega_0(t)=0$. 

Notice that, with a similar procedure, this estimate can be obtained for each point in a neighborhood of $P$ on the side $P^0_jP^0_{j+1}$.
Since $\tilde{\mathcal{U}}$ is affine on $P^0_jP^0_{j+1}$, we get that the estimate holds at the endpoint of the segment as well, hence
\[|\tilde{\mathcal{U}}(P^0_j)\cdot n_j)|\leq \omega_0(m_1),\]
and 
\[|\tilde{\mathcal{U}}(P^0_{j+1})\cdot n_j|\leq \omega_0(m_1),\]
where $n_j$ denotes the outer normal direction to $P^0_jP^0_{j+1}$ and $\omega_0$ is the  function  in \eqref{omegazero} multiplied by a constant depending on the a priori parameters.

 In particular, by \eqref{i.31}, for each $P^0_j$ we have
\begin{equation}\label{b101}\left|\frac{P^0_j-P^1_j}{|V|}\cdot n\right|\leq \omega_0(m_1)\end{equation}
for each $n$ normal to sides through $P^0_j$.

Let $j_0$ be such that $|P^0_{j_0}-P^1_{j_0}|=\max_{j=1,\ldots,M}|P^0_{j}-P^1_{j}|$.  By construction $P^0_{j_0}$ is an endpoint of $\mathcal{P}^0$ (and not an intersection with $\Sigma$) and 
$\frac{|P^0_{j_0}-P^1_{j_0}|}{|V|}\geq \frac{1}{M}$.
Moreover, since there are two linearly independent unit directions $n$ for which \eqref{b101} holds for $j=j_0$, then it holds for every unit direction, and, by choosing $\tilde{n}$ parallel to $P^0_{j_0}-P^1_{j_0}$ we get
\[\frac{1}{M}\leq \frac{|P^0_{j_0}-P^1_{j_0}|}{|V|}=\left|\frac{P^0_j-P^1_j}{|V|}\cdot \tilde{n}\right|\leq \omega_0(m_1)\]
from which
\begin{equation}\label{m1s}
m_1\geq \omega_0^{-1}(\frac{1}{M}).
\end{equation}
By definition of the operator norm of $H$, there exist $f_0$ and $g_0$ in $H^{1/2}(\der\om)$ such that
\[|H(f_0,g_0)|\geq \frac{m_1}{2}\|f_0\|_{H^{1/2}(\der\om)}\|g_0|_{H^{1/2}(\der\om)}\]
and \eqref{tesibasso} is true for $m_0=\frac{\omega_0^{-1}(1/N)}{2}$. Finally observe that if $m_1> \left(\frac{ \exp( -(48)^4) }{C}\right)^{1/\delta^2}$ then the statement is true for $m_0=\frac{1}{2}\left(\frac{ \exp( -(48)^4) }{C}\right)^{1/\delta^2}$ which concludes the proof.
\end{proof}
\begin{rem}
The same proof works in the case of multiple interfaces as long as we control the distance between interfaces (see assumption \eqref{distanzastrati}) and only consider polygons whose vertices are far from the interfaces (see assumption \eqref{distanzainterfaccia}). In that case the proof is only technically more involved to write down in the integration by parts that lead to formula \eqref{b62}. As far as the unique continuation estimate from \cite{CW} that we used to get estimate \eqref{t3}, they rely on a three ball inequality that holds even if the ball intersect the interfaces (see \cite[Theorem 4.1]{CW}).
\end{rem}

\subsection{Lipschitz stability estimate}
In this section we conclude the proof of Theorem \ref{mainteo}. 

Let us first assume that $\|\Lambda_{\mathcal{P}^0}-\Lambda_{\mathcal{P}^1}\|_*\leq \varepsilon_0$ and let $f_0$ and $g_0$ the functions the satisfy \eqref{tesibasso} in Proposition \ref{p3.3}.
By \eqref{tesibasso} and by \eqref{contder} we have

\begin{eqnarray*}\label{fine}
	 \left|<\left(\Lambda_{\mathcal{P}^0}-\Lambda_{\mathcal{P}^1}\right)(f_0),g_0>\right|\!\!&=\!\!&\left|F(1,f_0,g_0)-F(0,f_0,g_0)\right|=\left|\int_0^1F'(t,f_0,g_0)dt\right|\\
\!\!	&\geq\!\!& |F'(0,f_0,g_0)| \!-\!\int_0^1\!\left| F'(t,f_0,g_0)-F'(0,f_0,g_0)\right|dt\\
	&\geq&\left(m_0-C|V|^\beta\right)|V|\|f_0\|_{H^{1/2}(\der\om)}\|g_0\|_{H^{1/2}(\der\om)}.
\end{eqnarray*}
Since
$|V|\leq M\max_jdist(P^0_j,P^1_j)$, by \eqref{stimarozza} it follows that there exists $\ep_1\in(0,\ep_0)$ depending only on a priori constant such that, if
\[\|\Lambda_{\mathcal{P}^0}-\Lambda_{\mathcal{P}^1}\|_*\leq\ep_1,\] then
\[\left(m_0-C|V|^\beta\right)\geq m_0/2\]
and
\begin{equation}\label{vertici}|V|\leq \frac{2}{m_1}\|\Lambda_{\mathcal{P}^0}-\Lambda_{\mathcal{P}^1}\|_*.\end{equation}
Finally, since 
\[d_{H}\left( \partial \mathcal{P}^{0},\partial \mathcal{P}^{1}\right)\leq C| V|\]
 the claim follows if $\|\Lambda_{\mathcal{P}^0}-\Lambda_{\mathcal{P}^1}\|_*\leq\ep_1$ (where $\ep_1$ depend only on the a priori data).

If, now, $\|\Lambda_{\mathcal{P}^0}-\Lambda_{\mathcal{P}^1}\|_*\geq\varepsilon_1$ (and, hence, also if $\|\Lambda_{\mathcal{P}^0}-\Lambda_{\mathcal{P}^1}\|_*\geq\varepsilon_0$), since the
following trivial inequality  holds 
\[
d_{H}\left( \partial \mathcal{P}^{0},\partial \mathcal{P}^{1}\right) \leq 2L,
\]
we easily derive%
\begin{equation}
d_{H}\left( \partial \mathcal{P}^{0},\partial \mathcal{P}^{1}\right) \leq
2L\leq 2L\frac{\| \Lambda_{\mathcal{P}^0}-\Lambda_{\mathcal{P}^1}\| _{\ast }}{%
\varepsilon _{1}}\leq C \| \Lambda_{\mathcal{P}^0}-\Lambda_{\mathcal{P}^1}\| _{\ast } \label{stab2}.
\end{equation}
\qed
\section*{Acknowledgements}
Elisa Francini and Sergio Vessella were partially supported by Research Project 201758MTR2 of the Italian Ministry of Education, University and Research (MIUR) Prin 2017 “Direct and inverse problems for partial differential equations: theoretical aspects and applications”.

\end{document}